\newtheorem{assum}{Assumption}
\newtheorem{prop}{Proposition}
\newtheorem{thm}{Theorem}
\newtheorem{rem}{Remark}
\newtheorem{lem}{Lemma}
\newtheorem{defn}{Definition}
\newtheorem{objective}{Objective}
\title{\LARGE \bf
Controller confidentiality for nonlinear systems under sensor attacks 
}
\author{Michelle S. Chong
\thanks{M. Chong is with the Control Systems Technology section at the Department of Mechanical Engineering, Eindhoven University of Technology. 
        {\tt\small m.s.t.chong@tue.nl} }

}
\begin{document}
\maketitle
\thispagestyle{empty}
\pagestyle{empty}

\begin{abstract}
Controller confidentiality under sensor attacks refers to whether the internal states of the controller can be estimated when the adversary knows the model of the plant and controller, while only having access to sensors, but not the actuators. We show that the controller's state can be estimated accurately when the nonlinear closed-loop system is detectable. In the absence of detectability, controller confidentiality can still be breached with a periodic probing scheme via the sensors under a robust observability assumption, which allows for the controller's state to be estimated with arbitrary accuracy during the probing period, and with bounded error during the non-probing period. Further, stealth can be maintained by choosing an appropriate probing duration. This study shows that the controller confidentiality for nonlinear systems can be breached by balancing the estimation precision and the stealthiness of the adversary.
\end{abstract}

\section{Introduction} \label{sec:intro}

The \textit{cyber} security of dynamical systems have gained traction in recent years as cyber-physical systems become increasingly interconnected, see \cite{sandberg2015cyberphysical} and \cite{chong2019tutorial} for a tutorial overview. While the connectivity improves performance and enhances the capabilities of cyber-physical systems, it also exposes vulnerabilities which can be exploited maliciously. The objective of the adversary is to gather data in order to launch an attack to disrupt operation, while avoiding detection by the system operator.

Although there are many vulnerable points in cyber-physical systems, the vulnerability of sensors has been widely studied thus far. In this setup, a subset of the sensor measurements can be read and manipulated by an adversary and various attack strategies have been investigated to avoid detection in works by \cite{cardenas2011attacks, murguia2019model, guo2018worst} to name a few, and to then still provide good estimates of the system states in works by \cite{an2017secure, shoukry2018smt,kim2018detection, chong2020secure} and more. Underlying the attack strategies mentioned earlier is the adversary's knowledge of the controller's state, which motivated a line of work investigating \textit{the confidentiality of control systems} \cite{umsonst2021confidentiality, dibaji2018secure, xue2014security, yuan2015security}. In all these works, control systems with only linear dynamics is considered.  

In this paper, we analyse the controller confidentiality of \textit{nonlinear systems}. Precisely, we provide rigorous analysis on whether the states of the controller can be estimated when the adversary can read and manipulate the sensors. We consider plant and controllers models with a general nonlinear structure, which already has some inherent stability properties, as all well designed control systems possess. The adversary knows the plant and controller models and has access to the sensors, but not the actuators. We show that if the closed-loop system is detectable (assumed in \cite{umsonst2021confidentiality}), then the adversary only needs to read the sensors and not manipulate them to reconstruct the controller's state exactly. In the absence of closed-loop detectability, the adversary needs to manipulate the sensors, which we call the act of probing, such that the controller's states can be estimated within a bounded margin of error. As the adversary now needs to probe the closed-loop system, this could raise alarms as anomaly detection schemes are typically employed in well designed control systems. In this scenario, we show that stealth can be maintained when the adversary employs a dual-mode probing scheme.

First, we assume that the closed-loop system is semiglobal asymptotically stable and has a robust observability property. With these assumptions, the estimation of the controller's state (with bounded error) and stealth (semiglobal practical stability of the closed-loop system) can be achieved. To do so, the adversary probes the closed-loop system via the sensors periodically for a short period. During which, a fast estimator can reconstruct the controller's state with desired precision. After which, the probing signal is turned off for a specified time interval to preserve the semiglobal practical stability of the closed loop system (maintain stealth), while still keeping the estimated controller's state within a neighborhood of the true controller's state. During the non-probing interval, the estimator is turned off and the estimate of the controller's state is held until the end of the non-probing interval. This scheme is reminiscent of the time-sharing strategies in \cite{shim2003asymptotic} and \cite{nesic2000output} for sampled-data output feedback for nonlinear systems and Wiener systems, respectively. This paper focuses only on confidentiality breaching strategies. Hence, future work will involve developing defense strategies which involve the introduction of uncertainties known to the system operator, but not known to the adversary, for example.  

Our paper is organised as follows. Preliminaries are introduced in Section \ref{sec:prelim} and the problem is motivated and formulated in Section \ref{sec:problem}. A non-invasive breach of controller confidentiality is analysed in Section \ref{sec:result_estimate_detectable}. Next, an invasive strategy is proposed in Section \ref{sec:result_estimate_not} where a dual-mode time shared probing strategy is proposed. The invasive strategy is shown to achieve the aim of stealthy estimation in Section \ref{sec:main_result}. We conclude the paper with Section \ref{sec:conclude} and proofs are provided in the Appendix.
 
\section{Preliminaries} \label{sec:prelim}
 	 Let $\mathbb{R}=(-\infty,\infty)$, $\mathbb{R}_{\geq 0}=[0,\infty)$, $\mathbb{R}_{>0}=(0,\infty)$. 
 	 Let $\mathbb{N}_{\geq i}=\{i,i+1,i+2,\dots\}$. A finite set of integers $\{i,i+1,i+2,\dots,i+k\}$ is denoted as $\mathbb{N}_{[i,i+k]}$.
 	 The identity matrix of dimension $n$ is denoted by $\mathbb{I}_{n}$. 
	A diagonal matrix with matrices $d_i$, $i\in\mathbb{N}_{[1,n]}$ is denoted by $\textrm{diag}(d_1,d_2,\dots,d_n)$.
 	Given a symmetric matrix $P$, its maximum (minimum) eigenvalue is denoted by $\lambda_{\max}(P)$ $(\lambda_{\min}(P))$.
	The infinity norm of a vector $x \in \mathbb{R}^{n}$, is denoted $|x|:= \underset{i\in\mathbb{N}_{[1,n]}}{\max} \left| x_i \right|$ and for a matrix $A\in\mathbb{R}^{n\times n}$, $|A|:= \underset{i\in\mathbb{N}_{[1,n]}}{\max} \underset{j\in\mathbb{N}_{[1,n]}}{\sum}|a_{ij}|$, where $a_{ij}$ is the row $i$-th and column $j$-th element of matrix $A$. 
     
 	A continuous function $\alpha:\mathbb{R}_{\geq 0}\to\mathbb{R}_{\geq 0}$ is a class $\mathcal{K}$ function, if it is strictly increasing and $\alpha(0)=0$; additionally, if $\alpha(r)\to\infty$ as $r\to\infty$, then $\alpha$ is a class $\mathcal{K}_{\infty}$ function. A continuous function $\beta:\mathbb{R}_{\geq0}\times \mathbb{R}_{\geq 0} \to \mathbb{R}_{\geq 0}$ is a class $\mathcal{KL}$ function, if: (i) $\beta(.,s)$ is a class $\mathcal{K}$ function for each $s\geq 0$; (ii) $\beta(r,.)$ is non-increasing and (iii) $\beta(r,s)\to 0$ as $s\to \infty$ for each $r\geq 0$.

\section{Motivation and problem formulation} \label{sec:problem}

\subsection{Plant and controller models} \label{sec:plant_controller}
We consider nonlinear systems of the form
\begin{align}
 \Sigma_p:  \qquad   \dot{x}_p & =  f_p(x_p,u), \label{eq:system}\\
        y & =  h(x_p) + a, \label{eq:system_output}
\end{align} 
where $x_p\in\mathbb{R}^{n_p}$ is the system's state, $u\in\mathbb{R}^{n_u}$ is the input, $y\in\mathbb{R}^{n_y}$ is the output and $a:\mathbb{R}_{\geq 0}\to \mathbb{R}^{n_y}$ is an attack signal, respectively. The functions $f_p$ is locally Lipschitz and $h$ is sufficiently smooth. 

We consider controllers with a general nonlinear structure taking the following form
\begin{align}  
  \Sigma_c: \qquad  \dot{x}_{c} & =  f_c(x_c,y), \qquad u  =  \kappa(x_c,y), \label{eq:control}
\end{align}
where $x_c \in \mathbb{R}^{n_c}$ is the controller's state,  the locally Lipschitz function $\kappa:\mathbb{R}^{n_c}\to\mathbb{R}^{n_u}$ is the control law and the function $f_c:\mathbb{R}^{n_c}\times \mathbb{R}^{n_y} \to \mathbb{R}^{n_c}$ is locally Lipschitz such that for all initial conditions $x(0)\in\mathbb{R}^{n_p}$ and $x_c(0)\in\mathbb{R}^{n_c}$, the trajectories of \eqref{eq:system}, \eqref{eq:system_output} and \eqref{eq:control} exist for all time $t\geq 0$. 

The controller model in \eqref{eq:control} captures both state and output feedback schemes. In the case where state feedback is employed to stabilise the plant \eqref{eq:system}, the plant output \eqref{eq:system_output} is $h(x_p)=x_p$ (in the absence of sensor attacks) and the controller model in \eqref{eq:control} becomes $f_c(x_c,y)=0$, $x_c(0)=0$ and $\kappa(x_c,y)=\kappa(0,h(x_p))$. When an output feedback stabilisation scheme is used, then the controller model \eqref{eq:control} takes the role of a state observer of the plant \eqref{eq:system} with $\kappa(x_c,y)$ being the control law.

In this paper, we focus on control schemes \eqref{eq:control} which render the closed-loop system composed of \eqref{eq:system}, \eqref{eq:system_output} and \eqref{eq:control} semiglobally asymptotically stable in the absence of sensor attacks ($a(t)=0$, for all $t\geq 0$) as stated in the assumption below.
\begin{assum}[Closed-loop system is SG-AS] \label{assum:closed_loop_AS}
    Let $x:=(x_p,x_c)$. The closed loop system from \eqref{eq:system}, \eqref{eq:system_output} and \eqref{eq:control}  with the following dynamics  for all $t\geq 0$,
\begin{align} \label{eq:closed_loop}
    \dot{x} & = \left(\begin{array}{c} f_p(x_p,\kappa(x_c,h(x_p))) \\ f_c(x_c, h(x_p)) \end{array}\right) =: f(x, y),
\end{align}
with $a(t)=0$, is asymptotically stable, i.e., there exist a class $\mathcal{KL}$ function $\beta_{z}$ such that 
\begin{equation}
    |x(t)| \leq \beta_x(|x(0)|,t), \; \forall t\geq 0.  \label{eq:closeloop_AS}
\end{equation}
When \eqref{eq:closeloop_AS} holds for $|x(0)|\leq \Delta_x$, where $\Delta_x>0$, we say that the closed-loop system \eqref{eq:closed_loop} is semiglobal asymptotically stable (SG-AS).
\hfill $\Box$
\end{assum}

Control schemes \eqref{eq:control} for nonlinear systems which involve output feedback results in a closed-loop system that is SG-AS for certain classes of systems, see \cite{ khalil1993semi, teel1994global, atassi1999separation, yang2014semi, lin2019nonlinear}, for example. For linear plant and controllers, this problem is well studied and Assumption \ref{assum:closed_loop_AS} holds thanks to the well-known separation principle which yields a controller \eqref{eq:control} that results in a closed loop system that is globally exponentially stable.

\subsection{Adversary model and objectives} \label{sec:attack_objectives}
We assume that the adversary has knowledge of the plant and controller models, but not their initial conditions $x(0)$ and $x_c(0)$. The adversary can manipulate the sensors $y$, but does not have access to the actuators $u$. Precisely, the adversary operates under the following conditions.
\begin{assum}[Adversary model]  \ \label{assum:adversary} 
    \begin{enumerate}
        \item The adversary can manipulate the sensor readings $h(x_p)$ via an attack signal $a$, modelled by \eqref{eq:system_output}.
        \item The adversary knows the functions $f_p$, $h$, $f_c$ and $\kappa$ from \eqref{eq:system}, \eqref{eq:system_output}, \eqref{eq:control}.
        \item The adversary does not know the control input $u$, nor the initial state of the plant and controller models $x(0)$. 
    \end{enumerate} \hfill $\Box$
\end{assum}

The objectives of the adversary are to obtain an estimate of the controller's states $x_c$ under the operating conditions stated in Assumption \ref{assum:adversary} without letting the state of the closed loop system $x(t)$ become unbounded in finite time, in the sense that $\underset{t\to T}{\lim} |x(t)| = \infty$, for $T<\infty$. We state these two objectives precisely below.

\begin{objective}[Estimation of the controller's state $x_c$] \label{obj:estimate}
    The estimate of the controller's state \eqref{eq:control} denoted by $\hat{x}_c$ converges to a neighbourhood of the controller's state $x_c$. \hfill $\Box$
\end{objective}

\begin{objective}[Maintaining stealth] \label{obj:stealth}
    The closed loop system \eqref{eq:closed_loop} is semiglobal practically stable, i.e. for any $K_x \geq \Delta_x >0$ , the solution to the closed loop system \eqref{eq:closed_loop} satisfies
    \begin{equation} \label{eq:stealth_obj}
        |x(0)|\leq \Delta_x \implies |x(t)| \leq  K_x, \; \forall t\geq 0.
    \end{equation} \hfill $\Box$
\end{objective}

When both of the aforementioned objectives are achieved, we say that the adversary has achieved stealthy estimation of the controller's state. In other words, the confidentiality of the control system has been breached. Figure \ref{fig:setup} illustrates the problem setup
\begin{figure}[h!]
    \centering
    \includegraphics[scale=0.95]{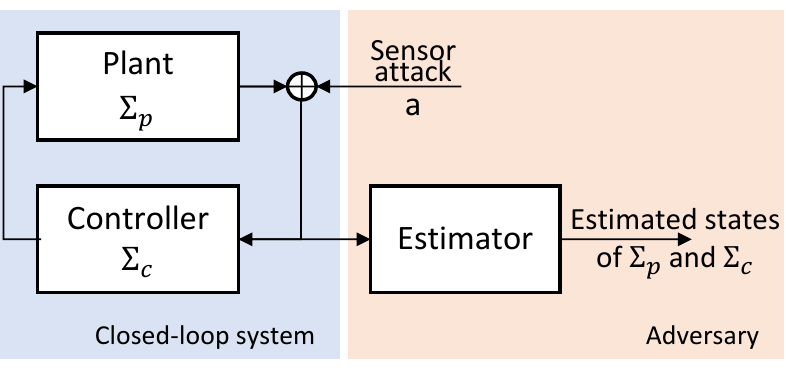} \vspace{-1em}
    \caption{Problem setup}
    \label{fig:setup}
\end{figure} 

In the sections that follow, we describe how an adversary can achieve these goals. The stealthy estimation of the controller's state $x_c$ can be realised without manipulating the sensor measurements when the closed-loop system \eqref{eq:closed_loop} is detectable in Section \ref{sec:result_estimate_detectable} and by using the sensor measurements $y$ to probe the closed-loop system in a time-shared manner in Section \ref{sec:result_estimate_not}.

\section{Closed-loop system \eqref{eq:closed_loop} is detectable} \label{sec:result_estimate_detectable}

We first consider the case where the closed-loop system \eqref{eq:closed_loop} is detectable, which is defined as follows.

\begin{defn}[Detectability] \label{def:detectable}
    The closed-loop system \eqref{eq:closed_loop} is detectable if there exists a function $l:\mathbb{R}^{n_y}\to\mathbb{R}^{n_x+n_c}$ with $l(0)=0$, such that the estimate $\hat{x}:=(\hat{x}_p, \hat{x}_c)$ is the solution to the following system with dynamics given by
\begin{align} \label{eq:detectable_observer}
    \dot{\hat{x}} & = f(\hat{x},y) + l(y-\hat{y}), \qquad \hat{y} = h(\hat{x}_p),
\end{align}
and the closed-loop system \eqref{eq:closed_loop} satisfy
\begin{align} \label{eq:error_detectable_observer}
    |\hat{x}(t)-x(t)| \leq \beta_{\hat{x}}(|\hat{x}(0)-x(0)|,t)+\gamma_x\left(\underset{s\in[0,t]}{\sup}|a(s)|\right),
\end{align}
for all $t\geq 0$, for all initial conditions $\hat{x}(0)$, $x(0)\in\mathbb{R}^{n_x+n_c}$, $\beta_x \in \mathcal{KL}$ and $\gamma_x \in \mathcal{K}$. \hfill $\Box$
\end{defn}

The function $l(\hat{y}-y)$ is known as an output injection term and the system \eqref{eq:detectable_observer} whose solution $\hat{x}$ provides the estimate of $x$ is known in the literature as a nonlinear observer. According to Definition \ref{def:detectable}, observers \eqref{eq:detectable_observer} with property \eqref{eq:error_detectable_observer} are known as input-to-state (ISS) observers with respect to the attack signal $a$. The design of observers \eqref{eq:detectable_observer} for detectable systems \eqref{eq:closed_loop} according to Definition \ref{def:detectable} is done for specific classes of systems, which exploits the inherent structure of the system, see \cite{besanccon2007nonlinear} for an overview. The following linear time-invariant system 
\begin{equation} \label{eq:linear_sys}
    \dot{x} = A x + B y, \qquad y = \left[\begin{array}{cc}C & 0\end{array}\right] x,
\end{equation}
with matrices $A$, $B$ and $C$ of the appropriate dimensions, which are $\left(A,\left[\begin{array}{cc}C &0\end{array}\right]\right)$ detectable in the sense of Definition \ref{def:detectable} coincide with the detectability notion for linear systems \cite[Section 16.3]{hespanha2018linear}. In which case, the observer \eqref{eq:detectable_observer} for linear system \eqref{eq:linear_sys} takes the form
\begin{equation} \label{eq:linear_obs}
    \dot{\hat{x}} = A \hat{x} + L(y-\hat{y}), \qquad
    \hat{y} = \left[\begin{array}{cc}C & 0\end{array}\right] \hat{x},
\end{equation}
and the detectability of the pair $\left(A,\left[\begin{array}{cc}C & 0\end{array}\right]\right)$ implies the existence of a linear function $l(y-\hat{y})=L(y-\hat{y})$ where $L$ is a matrix (also known as the observer gain matrix), such that property \eqref{eq:error_detectable_observer} holds. The observer \eqref{eq:linear_obs} is also known as the Luenberger observer.

Hence, if the closed-loop system \eqref{eq:closed_loop} is detectable, the adversary can estimate the controller's state $x_c$ (Objective \ref{obj:estimate}) by only monitoring the sensor measurements $y$ without manipulating them (i.e., $a(t)=0$ for all $t\geq 0$) and thereby remaining stealthy (Objective \ref{obj:stealth}) under Assumption \ref{assum:closed_loop_AS}. We summarise this in Proposition \ref{prop:detect} below.

\begin{prop} \label{prop:detect}
Consider the closed-loop system \eqref{eq:closed_loop} and adversary model under Assumptions \ref{assum:closed_loop_AS} and \ref{assum:adversary}, respectively. If the closed-loop system \eqref{eq:closed_loop} is detectable, then the adversary achieves Objectives \ref{obj:estimate} and \ref{obj:stealth} using \eqref{eq:detectable_observer} with $a(t)=0$, for all $t\geq 0$, in the sense that
\begin{align} \label{eq:detectable_result}
    |\hat{x}(t)-x(t)| & \leq \beta_{\hat{x}}(|\hat{x}(0)-x(0)|,t), \\
    |x(t)| & \leq \beta_x(|\hat{x}(0)-x(0)|,t), \;  \forall t\geq 0,
\end{align}
for all initial conditions $\hat{x}(0)$, $x(0)\in\mathbb{R}^{n_x+n_c}$ satisfying $|\hat{x}(0)|\leq \Delta_x$ and $|x(0)|\leq \Delta_x$, and $\beta_{\hat{x}} \in \mathcal{KL}$ and $\beta_{x} \in \mathcal{KL}$ comes from Definition \ref{def:detectable} and Assumption \ref{assum:closed_loop_AS}, respectively. \hfill $\Box$ 
\end{prop}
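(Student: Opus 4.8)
The plan is to exploit the fact that the choice $a(t)\equiv 0$ \emph{decouples} the adversary's observer \eqref{eq:detectable_observer} from the plant--controller loop \eqref{eq:closed_loop}: with no sensor attack, $x$ evolves autonomously as in \eqref{eq:closed_loop} and is unaffected by $\hat x$, while the estimation error $\hat x - x$ is controlled by the ISS bound of Definition \ref{def:detectable}. I would therefore establish the two lines of \eqref{eq:detectable_result} separately -- the first from detectability, the second from Assumption \ref{assum:closed_loop_AS} -- and then read off Objectives \ref{obj:estimate} and \ref{obj:stealth}.

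For the estimation bound, I first note that $a(t)=0$ for all $t\ge 0$ is an admissible adversary action under Assumption \ref{assum:adversary}. Substituting it into \eqref{eq:error_detectable_observer}, and using that $\gamma_x\in\mathcal K$ so $\gamma_x\big(\sup_{s\in[0,t]}|a(s)|\big)=\gamma_x(0)=0$, yields $|\hat x(t)-x(t)|\le\beta_{\hat x}(|\hat x(0)-x(0)|,t)$ for all $t\ge0$ and all admissible initial conditions, which is the first line of \eqref{eq:detectable_result}. Since $\beta_{\hat x}\in\mathcal{KL}$ satisfies $\beta_{\hat x}(r,t)\to0$ as $t\to\infty$, the full-state estimate converges to $x$; restricting to the controller component gives $\hat x_c(t)\to x_c(t)$, so $\hat x_c$ enters (indeed exactly reaches) any prescribed neighbourhood of $x_c$, establishing Objective \ref{obj:estimate}.

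For stealth, since $a(t)\equiv0$ the hypotheses of Assumption \ref{assum:closed_loop_AS} hold verbatim for \eqref{eq:closed_loop}, so $|x(t)|\le\beta_x(|x(0)|,t)$ for all $t\ge0$ whenever $|x(0)|\le\Delta_x$, which is the closed-loop trajectory bound asserted in \eqref{eq:detectable_result}. Using that $\beta_x(r,\cdot)$ is non-increasing gives the time-uniform estimate $|x(t)|\le\beta_x(|x(0)|,0)\le\beta_x(\Delta_x,0)=:K_x^\star$, and evaluating \eqref{eq:closeloop_AS} at $t=0$ shows $K_x^\star\ge\Delta_x$. Hence $x$ is bounded on $[0,\infty)$ -- in particular it cannot diverge in finite time -- and for every $K_x\ge K_x^\star$ the implication $|x(0)|\le\Delta_x\Rightarrow|x(t)|\le K_x$ of \eqref{eq:stealth_obj} holds, which is Objective \ref{obj:stealth} (semiglobal practical stability).

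I do not expect a genuine obstacle: the analytical content is already packaged into Assumption \ref{assum:closed_loop_AS} and Definition \ref{def:detectable}, and the proposition is essentially their immediate combination. The only points requiring care are (i) confirming that $a\equiv0$ is a legitimate adversary choice and that it exactly removes the $\gamma_x$ term from \eqref{eq:error_detectable_observer}; (ii) the $\mathcal K/\mathcal{KL}$ bookkeeping needed to translate the raw bounds into Objectives \ref{obj:estimate}--\ref{obj:stealth}; and (iii) noting that solutions of both \eqref{eq:closed_loop} and the observer \eqref{eq:detectable_observer} are defined for all $t\ge0$ (the former by the standing assumption on $f_c$ stated below \eqref{eq:control}, the latter implicit in Definition \ref{def:detectable}), so that the global-in-time statements are meaningful.
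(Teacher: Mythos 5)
Your proposal is correct and follows exactly the route the paper intends: the paper gives no separate proof of Proposition \ref{prop:detect} precisely because it is the immediate combination of Definition \ref{def:detectable} with $a\equiv 0$ (which kills the $\gamma_x$ term) and Assumption \ref{assum:closed_loop_AS} (which gives the trajectory bound), which is what you do. Your second bound $|x(t)|\le\beta_x(|x(0)|,t)$ is in fact the correct reading of the statement (the argument $|\hat x(0)-x(0)|$ in the paper's second inequality of \eqref{eq:detectable_result} is evidently a typo), so no gap remains.
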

As seen in \eqref{eq:detectable_result}, the adversary performs better than the stated objectives by achieving asymptotic convergence of the estimates $\hat{x}_c$ to the controller's state $x_c$ and the closed-loop system \eqref{eq:closed_loop} preserves the inherent SG-AS property from Assumption \ref{assum:closed_loop_AS}. 

This setup was studied in discrete-time for an LTI closed-loop system in \cite{umsonst2021confidentiality} in the presence of Gaussian process and measurement noise where a time-varying Kalman filter is proposed as the optimal controller's state estimator. Here, we do not consider the presence of noise, but leveraging noise to preserve the confidentiality of the controller will be an important future endeavour of this work. 

The crucial assumption in \cite{umsonst2021confidentiality} is the detectability of the closed-loop system. To the best of our knowledge, no results exist in the literature for when the closed-loop system \eqref{eq:closed_loop} is not detectable according to Definition \ref{def:detectable}. Hence, the novelty of this work is in showing that controller confidentiality can be breached in the absence of closed-loop detectability. The rest of the paper is dedicated to this unexplored aspect.

\section{Closed-loop system \eqref{eq:closed_loop} is \textit{NOT} detectable} \label{sec:result_estimate_not}
When the closed-loop system \eqref{eq:closed_loop} is \textit{NOT} detectable, the stealthy estimation of the controller's state $x_c$ can be achieved through manipulating the sensor measurement $y$ by way of the attack signal $a$, modelled by \eqref{eq:system_output}. The compromised sensor $y$ is used to probe the closed-loop system \eqref{eq:closed_loop} periodically within the time interval $[kT,(k+1)T]$, $k\in\mathbb{N}_{\geq 0}$, for a short period of time $t^*>0$ such that the controller's state $x_c$ can be estimated within some desired margin of error during the probing interval of $[kT,kT+t^*]$ and with bounded error for the remainder of the interval. The probing however, may lead to detection by the operator, and hence is only held sufficiently long, such that the closed-loop system \eqref{eq:closed_loop} remains practically stable, i.e., stealth is maintained according to \eqref{eq:stealth_obj}. 

To this end, we require a modification of an observability notion  first introduced in \cite{shim2003asymptotic} where we need to apply an open-loop probing signal $y^*$ for the closed-loop system \eqref{eq:closed_loop} within a finite time interval such that its states can be estimated. Adopting the same terminology as in \cite{shim2003asymptotic}, such an observability notion is defined as follows:

\begin{defn}[Robust observabilty] \label{def:obs}
 System \eqref{eq:closed_loop}, \eqref{eq:system_output} is semiglobal $q$-robust observable (SG$q$-RO) for  $t\in[0,t^*]$,  $t^*>0$, if, for each $\Delta_x \geq 0$, there exist an integer $q\in\mathbb{N}_{\geq 1}$, a $\mathcal{C}^{q+1}$ function $y^*:[0,t^*]\to\mathbb{R}^{n_y}$ and a function $\Psi:\mathbb{R}^{2(q+1)n_y}\to \mathbb{R}^{n_x}$ such that for the probed system
\begin{equation} \label{eq:probed_system}
    \dot{x}=f(x,y^*)=\left(\begin{array}{c} f_p(x_p,\kappa(x_c,y^*)) \\ f_c(x_c, y^*) \end{array}\right), \; y=h(x_p),
\end{equation}
with initial condition $|x(0)|\leq \Delta_x$, the following is satisfied for $t\in[0,t^*]$,
\begin{itemize}
    \item the solution $x(t)$ to \eqref{eq:probed_system} exists,
    \item the function $\Psi$ maps the measurement $h(x_p)$ and the probing signal $y^*$ as well as their derivatives to the solution $x(t)$ as follows   $$x(t)=\Psi(Y(t),Y^*(t)),$$ where $Y:=(h(x_p), L_{f_p}{h}(x_p), \dots, L_{f_p}h^{(q)}(x_p))$ and $Y^*:=(y^*,\dot{y}^*,\dots,{y^{*}}^{(q)})$, where $L_{f_p}h^{(q)}(x_p)$ denotes the $q$-th time derivative of $h(x_p)$,
    \item  there exists $\rho_{\Psi}\in\mathcal{K}_{\infty}$ such that
    $$ \left| \Psi(\widehat{Y},Y^*) - \Psi(Y,Y^*) \right| \leq \rho_{\Psi}\left(\left|\widehat{Y}-Y\right|\right). $$
\end{itemize} \vspace{-1em}
\hfill $\Box$
\end{defn}
For examples of systems which are SG$q$-RO and on how to construct the probing signal $y^*$, the reader is referred to the origin of this observability notion in \cite{shim2003asymptotic}. A consequence of the SG$q$-RO property of system \eqref{eq:closed_loop}, \eqref{eq:system_output} is that the sensor measurement $h(x_p)$ needs to be sufficiently smooth. For the proposed adversarial scheme to work, we further require the following.
\begin{assum}\label{assum:compact_smooth_output}
 Suppose that the closed-loop system \eqref{eq:closed_loop} is SG$q$-RO. For a given $t^*>0$, there exist compact sets $\mathcal{H}_{q}$ and $\mathcal{H}_{q+1}$ such that for all  $t\in[0,t^*]$,
     \begin{align}
         \left(L_{f_p}{h}(x_p(t)), \dots, L_{f_p}h^{(q)}(x_p(t)))  \right) & \in \mathcal{H}_{q}, \textrm{ and } \nonumber \\
          L_{f_p}h^{(q+1)}(x_p(t))) & \in \mathcal{H}_{q+1}.
     \end{align} 
     \hfill $\Box$
\end{assum}

The proposed adversarial strategy which ensures that the adversary's estimate of the controller state $\hat{x}_c$ converges to a neighborhood of the controller's state $x_c$ and the closed-loop system \eqref{eq:closed_loop} is semiglobal practical stable  (maintaining stealth), takes the following form under the assumption that the closed-loop system \eqref{eq:closed_loop} is SG$q$-RO.

First, a probing duration $t^*>0$ is chosen and then a suitable total duration $T>t^*$ is selected. Then each time interval $[kT,(k+1)T]$, for $k\in\mathbb{N}_{\geq 0}$, is subdivided into a probing interval $\mathcal{T}_{k}:=[kT,kT+t^*)$ and non-probing interval ${\overline{\mathcal{T}}}_{k}:=[kT+t^*,(k+1)T)$. During the probing interval $\mathcal{T}_{k}$, the adversary probes the system for a short duration $t^*>0$ by compromising the sensor measurements $y$ via the attack signal $a$. After which, the adversary stops probing to maintain stealth during $\overline{\mathcal{T}}_k$ such that,
\begin{equation} \label{eq:probe_y}
    y(t) = \left\{\begin{array}{ll} y^*(t-kT), & t\in\mathcal{T}_{k},  \\ h(x_p(t)), & t\in\overline{\mathcal{T}}_{k},  \end{array}\right.
\end{equation}
and the resulting closed-loop system is
\begin{equation} \label{eq:closed_loop_probe}
    \dot{x}(t) = \left\{\begin{array}{ll} f(x(t),y^*(t-kT)), & t\in\mathcal{T}_{k}, \\ f(x(t),h(x_p(t))), & t\in\overline{\mathcal{T}}_{k}.  \end{array}\right.
\end{equation}
By the probing procedure of \eqref{eq:probe_y}, the controller's state estimate $\hat{x}_{c}$ is obtained via
\begin{align} \label{eq:estimate_y}
    \dot{\widehat{Y}}(t) & = \left\{\begin{array}{ll} \widehat{A} \widehat{Y}(t) + \theta \Delta_\theta \widehat{H}\left( h(x_p(t))-\widehat{C}\widehat{Y}(t) \right), & t\in\mathcal{T}_{k}, \\ 0, & t\in\overline{\mathcal{T}}_{k},  \end{array}\right. 
\end{align}
where 
    $\widehat{Y}\in\mathbb{R}^{(q+1)n_y}$,
    $\widehat{C}=\left[\begin{array}{cc} \mathbb{I}_{n_y} & 0_{n_y \times (q+1)n_y} \end{array} \right]$,
    $\widehat{H}=\left[\begin{array}{cccc} a_1 \mathbb{I}_{n_y} & a_2 \mathbb{I}_{n_y} & \dots & a_{q+1} \mathbb{I}_{n_y} \end{array} \right]^{T}$ with $a_i$ chosen such that the polynomial $s^{q+1}+a_1 s^{q} + a_2 s^{q-1} + \dots + a_{q+1}$ is Hurwitz,
    $\Delta_\theta = \textrm{diag}\left(\mathbb{I}_{n_y},\theta\mathbb{I}_{n_y}, \theta^2 \mathbb{I}_{n_y},\dots, \theta^q \mathbb{I}_{n_y}\right)$ with a constant tuning parameter $\theta\geq 1$, and
    $\widehat{A}=\left[\begin{array}{cc} 0_{qn_y \times n_y} & \mathbb{I}_{qn_y} \\ 0_{n_y\times n_y} & 0_{n_y\times qn_y} \end{array} \right].$
\vspace{1em}

The initialisation of \eqref{eq:estimate_y} is chosen to be
\begin{equation} \label{eq:estimate_initial}
    \widehat{Y}(kT) \in \mathcal{Y}(h(x_p(kT))),
\end{equation}
where $\mathcal{Y}(r):=\left\{\widehat{Y}\in\mathbb{R}^{(q+1)n_y}: \left| \widehat{Y} - r \right| \leq  \epsilon_y, \epsilon_y>0  \right\}$.
\vspace{1em}

The adversary can then obtain an estimate of the controller's state as follows
\begin{align} \label{eq:estimate_xc}
    \hat{x}(t) = \left(\begin{array}{c}\hat{x}_p(t) \\ \hat{x}_{c}(t) \end{array}\right)  = \Psi\left(\widehat{Y}(t),Y^*(t-kT)\right),  t\in \mathcal{T}_{k} \cup \bar{\mathcal{T}}_{k},
\end{align}
where $q$, and $\Psi$ come from the assumption that the closed-loop system is SG$q$-RO as defined in Definition \ref{def:obs}. The probing duration $t^*>0$ is dictated by the robustness of the closed-loop system \eqref{eq:closed_loop} such that it is semiglobally practically stable (Objecive 2: maintaining stealth).

\section{Main result} \label{sec:main_result}
The proposed probing scheme was inspired by the dual mode sampled-data output feedback control strategy in \cite{shim2003asymptotic}. Consequently, elements of the proof of Theorem \ref{thm:stealthy_estimation} follow that of \cite{shim2003asymptotic} and \cite{kellett2004further} with some modifications as our resulting closed-loop system \eqref{eq:closed_loop}, \eqref{eq:system_output} does not have a sample-and-hold input. The adversary's estimate of the controller's state does employ a sample-and-hold observer \eqref{eq:estimate_y}, \eqref{eq:estimate_initial}, \eqref{eq:estimate_xc}, but is not employed in the closed-loop system \eqref{eq:closed_loop}.  

In the sequel, we will pave the way towards our main result (Theorem \ref{thm:stealthy_estimation}) in Section \ref{sub:thm} by addressing the fulfillment of Objective 1 and 2 in Sections \ref{sub:estimate} and \ref{sub:stealth}, respectively. 

\subsection{Objective 1: controller's state estimation} \label{sub:estimate}
\begin{prop} \label{prop:estimate}
Given $t^*>0$, consider the closed-loop system \eqref{eq:closed_loop_probe}, \eqref{eq:probe_y} that is SG$q$-RO and satisfies Assumption \ref{assum:closed_loop_AS} and \ref{assum:compact_smooth_output}, the adversary model under Assumption \ref{assum:adversary}, the estimator \eqref{eq:estimate_y}, \eqref{eq:estimate_initial} and the controller's state estimate \eqref{eq:estimate_xc}. For all $K_{\tilde{x}}>0$, there exist $\theta \geq 1$ and $\sigma_{\tilde{x}}\in\mathcal{K}_{\infty}$ such that for all $k\in\mathbb{N}_{\geq 0}$,
\begin{align} \label{eq:xhat_T}
    &|\hat{x}(kT+t^*)-x(kT+t^*)| \leq K_{\tilde{x}}, & \nonumber \\
    & |\hat{x}(t)-x(t)| \leq \sigma_{\tilde{x}}(\Delta_{e,k}\theta^{q-1}),  & \forall t\in \mathcal{T}_{k},
\end{align}
where $\Delta_{e,k}:={\max}\{|Y_a-Y_b|, \, Y_a,Y_b \in \mathcal{Y}(h(x_p(kT)) \}$. Further, suppose $|x(kT+t^*)|\leq {\Delta_x}$. Then, there exists $\overline{\sigma}_{\tilde{x}}\in\mathcal{K}_{\infty}$ such that
\begin{align} \label{eq:xhat_Tbar}
    |\hat{x}(t)-x(t)| &\leq K_{\tilde{x}} +  \Delta_{x} + \overline{\sigma}_{\tilde{x}}(\Delta_x),    & \forall t\in \overline{\mathcal{T}}_{k},
\end{align}
and for all $\epsilon_{\tilde{x}}>0$, there exists $T>0$ such that
\begin{align} \label{eq:xhat_Tbar_end}
    |\hat{x}((k+1)T)-x((k+1)T)| &\leq K_{\tilde{x}} + \Delta_{x} + \epsilon_{\tilde{x}}.
\end{align}
\hfill $\Box$
\end{prop}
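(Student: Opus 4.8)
The plan is to analyse the probing windows $\mathcal{T}_k=[kT,kT+t^*)$ and the idle windows $\overline{\mathcal{T}}_k=[kT+t^*,(k+1)T)$ separately and to patch the estimates at the switching instants $kT$, $kT+t^*$, $(k+1)T$. On $\mathcal{T}_k$ the loop \eqref{eq:closed_loop_probe} is exactly the probed system of Definition~\ref{def:obs} driven by $y^*(\cdot-kT)$, and \eqref{eq:estimate_y} is a high-gain observer for the integrator chain $\dot{Y}_i=Y_{i+1}$ ($i\le q$), $\dot{Y}_{q+1}=L_{f_p}h^{(q+1)}(x_p)$ satisfied by $Y=(h(x_p),L_{f_p}h(x_p),\dots,L_{f_p}h^{(q)}(x_p))$; the state estimate is recovered through $\Psi$ via \eqref{eq:estimate_xc}. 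On $\overline{\mathcal{T}}_k$ the estimator \eqref{eq:estimate_y} is frozen, hence $\hat x$ is held, while the true $x$ runs under the undisturbed closed loop \eqref{eq:closed_loop} and is controlled by Assumption~\ref{assum:closed_loop_AS}. The bridge from the $\widehat{Y}$-error to the state-estimate error is, in both phases, the uniform-continuity modulus $\rho_\Psi$ of Definition~\ref{def:obs}, and Assumption~\ref{assum:compact_smooth_output} is what keeps the arguments of $\Psi$ inside the compact set on which $\rho_\Psi$ is available and what bounds the forcing term of the observer error.

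For $\mathcal{T}_k$, write $e:=\widehat{Y}-Y$ with blocks $e_i\in\mathbb{R}^{n_y}$. Subtracting the chain dynamics of $Y$ from \eqref{eq:estimate_y} gives $\dot e_i=e_{i+1}-\theta^{i}a_ie_1$ for $i\le q$ and $\dot e_{q+1}=-\theta^{q+1}a_{q+1}e_1-L_{f_p}h^{(q+1)}(x_p)$. The high-gain change of variables $\zeta_i:=e_i/\theta^{i-1}$ turns this into $\dot\zeta=\theta A_0\zeta-\theta^{-q}B_0L_{f_p}h^{(q+1)}(x_p)$, where $A_0$ is the companion matrix whose characteristic polynomial is the Hurwitz polynomial $s^{q+1}+a_1s^{q}+\dots+a_{q+1}$ fixed by the $a_i$, and $B_0$ selects the last block. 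Taking $P=P^\top>0$ with $A_0^\top P+PA_0=-\mathbb{I}$ and $V=\zeta^\top P\zeta$, Assumption~\ref{assum:compact_smooth_output} bounds $|L_{f_p}h^{(q+1)}(x_p(t))|\le M$ on $[0,t^*]$, so a standard completion of squares yields $\dot V\le-\frac{\theta}{2\lambda_{\max}(P)}V+\frac{cM^2}{\theta^{2q+1}}$ and hence $|\zeta(t)|\le c_1e^{-c_2\theta(t-kT)}|\zeta(kT)|+c_3M\theta^{-(q+1)}$ on $\mathcal{T}_k$, with $c_1,c_2,c_3$ independent of $\theta$ and $k$. The initialisation \eqref{eq:estimate_initial} fixes the first block of $\widehat{Y}(kT)$ to the read output $h(x_p(kT))$, so the first block of $e(kT)$ vanishes, while $\mathcal{Y}(h(x_p(kT)))$ is (in view of Assumption~\ref{assum:compact_smooth_output}) large enough to contain $Y(kT)$; thus $|e(kT)|\le\Delta_{e,k}$ and $|\zeta(kT)|\le\Delta_{e,k}/\theta$. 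Since $|e(t)|\le\theta^{q}|\zeta(t)|$ we obtain $|e(t)|\le c_1\theta^{q-1}e^{-c_2\theta(t-kT)}\Delta_{e,k}+c_3M\theta^{-1}$ on $\mathcal{T}_k$, uniformly in $k$ because $\Delta_{e,k}\le2\epsilon_y$. Evaluated at $t=kT+t^*$ both terms vanish as $\theta\to\infty$ (the first because $\theta^{q-1}e^{-c_2\theta t^*}\to0$), so given $K_{\tilde x}>0$ we fix $\theta$ large enough that $\rho_\Psi(|e(kT+t^*)|)\le K_{\tilde x}$ — the first line of \eqref{eq:xhat_T}; enlarging $\theta$ once more so that $c_3M\theta^{-1}\le\Delta_{e,k}\theta^{q-1}$ gives $|e(t)|\le(c_1+1)\Delta_{e,k}\theta^{q-1}$ on all of $\mathcal{T}_k$, and composing with $\rho_\Psi$ and \eqref{eq:estimate_xc} delivers the second line with $\sigma_{\tilde x}(r):=\rho_\Psi\big((c_1+1)r\big)$. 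Existence of $x(\cdot)$ on $\mathcal{T}_k$, used throughout, is the first bullet of Definition~\ref{def:obs} applied with $|x(kT)|\le\Delta_x$.

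For $\overline{\mathcal{T}}_k$, \eqref{eq:estimate_y} freezes $\widehat{Y}$, hence by \eqref{eq:estimate_xc} $\hat x(t)=\hat x(kT+t^*)$ there, while $x$ solves \eqref{eq:closed_loop} with $a\equiv0$ from $x(kT+t^*)$; under the hypothesis $|x(kT+t^*)|\le\Delta_x$, Assumption~\ref{assum:closed_loop_AS} gives $|x(t)|\le\beta_x(\Delta_x,t-kT-t^*)\le\beta_x(\Delta_x,0)$. Therefore $|\hat x(t)-x(t)|\le|\hat x(kT+t^*)-x(kT+t^*)|+|x(kT+t^*)|+|x(t)|\le K_{\tilde x}+\Delta_x+\overline\sigma_{\tilde x}(\Delta_x)$ with $\overline\sigma_{\tilde x}(r):=\beta_x(r,0)$ (dominated on the range of interest by a $\mathcal{K}_\infty$ function), which is \eqref{eq:xhat_Tbar}. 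Finally $\beta_x(\Delta_x,T-t^*)\to0$ as $T\to\infty$, so for any $\epsilon_{\tilde x}>0$ pick $T$ with $\beta_x(\Delta_x,T-t^*)\le\epsilon_{\tilde x}$; then $|\hat x((k+1)T)-x((k+1)T)|\le|\hat x(kT+t^*)|+|x((k+1)T)|\le K_{\tilde x}+\Delta_x+\epsilon_{\tilde x}$, i.e.\ \eqref{eq:xhat_Tbar_end}.

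I expect the decisive difficulty to lie in the probing estimate. Because $t^*$ is fixed in advance, the observer must converge within a short window, which forces the high-gain route and with it the peaking phenomenon: this is exactly why the transient bound carries the factor $\theta^{q-1}$, and why Assumption~\ref{assum:compact_smooth_output}, by bounding $L_{f_p}h^{(q+1)}(x_p)$, is needed to keep the forcing term at $O(\theta^{-(q+1)})$ in $\zeta$-coordinates rather than letting the coordinate change inflate it. The second delicate point is uniformity in $k$, so that one and the same $\theta$ serves every interval; this rests on $\Delta_{e,k}\le2\epsilon_y$ and on the standing bounds $|x(kT)|\le\Delta_x$ and $|x(kT+t^*)|\le\Delta_x$, which must be supplied by the stealth analysis of Objective~2. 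A last bookkeeping point is to keep the arguments of $\Psi$ inside the compact region where $\rho_\Psi$ is valid, which is again precisely what Assumption~\ref{assum:compact_smooth_output} provides.
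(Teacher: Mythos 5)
Your proposal is correct and follows essentially the same route as the paper: the same splitting into probing/non-probing windows (the paper's Lemmas \ref{lem:probe} and \ref{lem:non_probe}), the same high-gain rescaling of the $\widehat{Y}$-error with a quadratic Lyapunov function for the Hurwitz matrix $\widehat{A}-\widehat{H}\widehat{C}$ and the bound on $L_{f_p}h^{(q+1)}$ from Assumption \ref{assum:compact_smooth_output}, and the same held-estimate/triangle-inequality argument with $T$ chosen so that $\beta_x(\Delta_x,T-t^*)\le\epsilon_{\tilde x}$ on $\overline{\mathcal{T}}_k$. The only differences are presentational: you use an ISS-type bound in the scaled coordinates where the paper does an inside/outside-$\Omega$ case split for the choice of $\theta$, and your $\sigma_{\tilde x}(r)=\rho_\Psi\bigl((c_1+1)r\bigr)$ explicitly keeps the composition with $\rho_\Psi$ (which the paper's final line $\sigma_{\tilde x}(r):=cr$ glosses over).
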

The proof of Proposition \ref{prop:estimate} employs the following two lemmas, each addressing the conditions needed and the convergence guarantees obtained for the probing interval $\mathcal{T}_{k}$ and the non-probing interval $\overline{\mathcal{T}}_{k}$ within each time interval $[kT, (k+1)T]$, for $k\in\mathbb{N}_{\geq 0}$. For clarity, the lemmas will be written for the case where $k=0$, and the results carry over straightforwardly to $k\in \mathbb{N}_{\geq 1}$ which we state in a remark that follows each lemma. The proofs of the lemmas can be found in the Appendix.

\begin{lem}[the probing interval $\mathcal{T}_{0}$] \label{lem:probe}
Under the same hypothesis as Proposition \ref{prop:estimate} for $t\in\mathcal{T}_{0}$, for all $K_{\tilde{x}}>0$, there exist  $\theta \geq 1$ and $\sigma_{\tilde{x}}\in\mathcal{K}_{\infty}$ such that
\begin{align} \label{eq:xhat_T_0}
    |\hat{x}(t^*)-x(t^*)| &\leq K_{\tilde{x}}, \nonumber \\
    |\hat{x}(t)-x(t)| &\leq \sigma_{\tilde{x}}(\Delta_{e,0}\theta^{q-1}),  \qquad \forall t\in \mathcal{T}_{0}, 
\end{align}
where $\Delta_{e,0}:=\max \left\{ \left|\widehat{Y}_{a} - \widehat{Y}_{b}\right| : \widehat{Y}_{a}, \widehat{Y}_{b} \in \mathcal{Y}(h(x_p(0))) \right\}$. \hfill $\Box$
\end{lem}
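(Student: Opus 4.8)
The plan is to exploit the fact that on the probing interval $\mathcal{T}_0=[0,t^*)$ the closed-loop system is in fact an open-loop system driven by the known $\mathcal{C}^{q+1}$ signal $y^*$, so that the estimator \eqref{eq:estimate_y} is a standard high-gain observer for the chain of integrators generated by the output derivatives. First I would change coordinates to $Y(t)=(h(x_p(t)),L_{f_p}h(x_p(t)),\dots,L_{f_p}h^{(q)}(x_p(t)))$, the ``true'' output-derivative vector along the probed trajectory \eqref{eq:probed_system}. By construction $Y$ satisfies $\dot Y = \widehat A Y + e_{q+1}\otimes L_{f_p}h^{(q+1)}(x_p)$, and Assumption \ref{assum:compact_smooth_output} guarantees that $Y(t)\in\mathcal{H}_q$ and the forcing term $L_{f_p}h^{(q+1)}(x_p(t))\in\mathcal{H}_{q+1}$ remain in fixed compact sets on $[0,t^*]$, uniformly over $|x(0)|\le\Delta_x$. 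Hence both the disturbance acting on the observer error and the region where the trajectory lives are bounded independently of $\theta$.

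Next I would form the scaled error $\eta := \Delta_\theta^{-1}(\widehat Y - Y)$ and derive its dynamics. A direct computation gives $\dot\eta = \theta(\widehat A - \widehat H\widehat C)\eta - \Delta_\theta^{-1}e_{q+1}\,L_{f_p}h^{(q+1)}(x_p)$, where $\widehat A-\widehat H\widehat C$ is Hurwitz by the choice of the $a_i$ making $s^{q+1}+a_1 s^q+\dots+a_{q+1}$ Hurwitz. Taking a quadratic Lyapunov function $V(\eta)=\eta^\top P\eta$ with $P$ solving the corresponding Lyapunov equation, and using the uniform bound on $L_{f_p}h^{(q+1)}$ from Assumption \ref{assum:compact_smooth_output} together with $|\Delta_\theta^{-1}e_{q+1}| = \theta^{-q}\le 1$, yields $\dot V \le -\theta c_1 V + c_2\sqrt V$ for constants $c_1,c_2>0$ independent of $\theta\ge 1$. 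A standard comparison argument then gives, for $t\in[0,t^*)$,
\begin{equation} \label{eq:eta_bound}
    |\eta(t)| \le e^{-\lambda\theta t}|\eta(0)| + \frac{c_3}{\theta},
\end{equation}
for suitable $\lambda,c_3>0$; undoing the scaling, $|\widehat Y(t)-Y(t)| \le |\Delta_\theta|\,|\eta(t)| \le \theta^q\big(e^{-\lambda\theta t}|\eta(0)| + c_3/\theta\big)$. Since $|\eta(0)|=|\Delta_\theta^{-1}(\widehat Y(0)-Y(0))|\le|\widehat Y(0)-Y(0)|$, and by the choice \eqref{eq:estimate_initial} of the initialization $\widehat Y(0)\in\mathcal{Y}(h(x_p(0)))$ while $Y(0)$ has first block $h(x_p(0))$, the quantity $|\widehat Y(0)-Y(0)|$ is controlled by $\Delta_{e,0}$ (up to the contribution of the higher output-derivative blocks of $Y(0)$, which are bounded by a constant via Assumption \ref{assum:compact_smooth_output}); bookkeeping these gives a bound of the form $|\widehat Y(t)-Y(t)|\le \sigma_Y(\Delta_{e,0}\theta^{q-1})$ on $\mathcal{T}_0$ for some $\sigma_Y\in\mathcal{K}_\infty$. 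Finally I would compose with $\Psi$ using the incremental bound $|\Psi(\widehat Y,Y^*)-\Psi(Y,Y^*)|\le\rho_\Psi(|\widehat Y-Y|)$ from Definition \ref{def:obs} and $\hat x(t)=\Psi(\widehat Y(t),Y^*(t))$, $x(t)=\Psi(Y(t),Y^*(t))$, which delivers the second inequality in \eqref{eq:xhat_T_0} with $\sigma_{\tilde x}:=\rho_\Psi\circ\sigma_Y$; for the first inequality, given any $K_{\tilde x}>0$ one evaluates \eqref{eq:eta_bound} at $t=t^*$ and chooses $\theta$ large enough that $\theta^q e^{-\lambda\theta t^*}|\eta(0)| + \theta^{q}c_3/\theta$ — which tends to $0$ as $\theta\to\infty$ because the exponential dominates the polynomial — is small enough that $\rho_\Psi$ of it is below $K_{\tilde x}$.

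The main obstacle I anticipate is the bookkeeping of the $\theta$ powers: one must verify that the peaking factor $\theta^q$ is beaten by the decay $e^{-\lambda\theta t^*}$ over the \emph{fixed} horizon $t^*$ (so that the first bound can be made arbitrarily small by increasing $\theta$), while simultaneously tracking that the residual term carries only $\theta^{q-1}\Delta_{e,0}$ rather than $\theta^q\Delta_{e,0}$ in the transient bound valid for all $t\in\mathcal{T}_0$ — this is where the precise interplay between the scaling $\Delta_\theta$, the initialization error concentrated in the first block, and the $1/\theta$ attenuation of the disturbance has to be handled carefully. A secondary technical point is ensuring all constants $c_1,c_2,c_3,\lambda$ are genuinely independent of $\theta$ and of $x(0)$ (over $|x(0)|\le\Delta_x$), which is exactly what Assumptions \ref{assum:closed_loop_AS} and \ref{assum:compact_smooth_output} are there to provide.
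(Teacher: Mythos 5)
Your overall route is the same as the paper's: pass to the derivative coordinates $Y$, form the scaled error $\eta=\Delta_\theta^{-1}(\widehat Y-Y)$, exploit that $\widehat A-\widehat H\widehat C$ is Hurwitz with a quadratic Lyapunov function, invoke Assumption \ref{assum:compact_smooth_output} to bound the forcing term $L_{f_p}h^{(q+1)}$, and finish by composing with $\rho_\Psi$ from the SG$q$-RO property. The only structural difference is cosmetic (you use an ISS-style bound, exponential decay plus residual, where the paper splits into an ultimate-bound set $\Omega$ and chooses $\theta$ through two conditions \eqref{eq:choose_theta_1}--\eqref{eq:choose_theta_2}).

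However, there is a genuine gap in your final step, and it comes precisely from the place you flagged but did not resolve. By bounding $|\Delta_\theta^{-1}e_{q+1}|=\theta^{-q}\le 1$ you obtain $\dot V\le-\theta c_1V+c_2\sqrt V$ with $c_2$ independent of $\theta$, hence a scaled residual of order $c_3/\theta$ in \eqref{eq:eta_bound}; after undoing the scaling this residual becomes $\theta^q\cdot c_3/\theta=c_3\theta^{q-1}$. Your claim that $\theta^q e^{-\lambda\theta t^*}|\eta(0)|+\theta^q c_3/\theta$ ``tends to $0$ as $\theta\to\infty$ because the exponential dominates the polynomial'' is false: the exponential only suppresses the first (transient) term, while the residual term $c_3\theta^{q-1}$ grows for $q\ge 2$ and is a fixed constant for $q=1$, so no choice of large $\theta$ yields $|\hat x(t^*)-x(t^*)|\le K_{\tilde x}$ for small $K_{\tilde x}$. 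The fix is to keep the attenuation you discarded: the disturbance enters the $\eta$-dynamics through $\Delta_\theta^{-1}\widehat B$, so its size is $\bar\phi/\theta^{q}$, giving $\dot V\le-\theta c_1 V+(c_2/\theta^{q})\sqrt V$, a scaled residual of order $1/\theta^{q+1}$, and an \emph{unscaled} residual of order $1/\theta$, which can indeed be driven below $\rho_\Psi^{-1}(K_{\tilde x})$ by enlarging $\theta$ --- this is exactly the role of condition \eqref{eq:choose_theta_2} in the paper. With that correction your argument goes through; the remaining looseness in relating $|\eta(0)|$ to $\Delta_{e,0}$ (the higher derivative blocks of $Y(0)$ are not controlled by $\epsilon_y$ alone) is shared with the paper's own proof and is not specific to your write-up.
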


\begin{rem}
    Given $T>0$, Lemma \ref{lem:probe} also holds true for subsequent probing intervals $\mathcal{T}_{k}$, $k\in\mathbb{N}_{\geq 0}$, where \eqref{eq:xhat_T_0} becomes
    \begin{align} \label{eq:xhat_T_k}
    |\hat{x}(kT+t^*)-x(kT+t^*)| &\leq K_{\tilde{x}}, \nonumber \\
    |\hat{x}(t)-x(t)| &\leq \sigma_{\tilde{x}}(\Delta_{e}\theta^{q-1}),  \; \forall t\in \mathcal{T}_{k}, 
\end{align}
where $\Delta_{e,k}:={\max} \left\{ \left|\widehat{Y}_{a} - \widehat{Y}_{b}\right| : \widehat{Y}_{a}, \widehat{Y}_{b} \in \mathcal{Y}(h(x_p(kT))) \right\}$. \hfill $\Box$
\end{rem}

\begin{lem}[the non-probing interval $\overline{\mathcal{T}}_{0}$] \label{lem:non_probe}
Under the same hypothesis as Proposition \ref{prop:estimate} for $t\in\overline{\mathcal{T}}_{0}$, suppose $|x(t^*)|\leq {\Delta_x}$. Then, there exists $\bar{\sigma}_{\tilde{x}}\in\mathcal{K}_{\infty}$ such that 
\begin{equation} \label{eq:xhat_T_0_non}
    \left|\hat{x}(t) - x(t) \right| \leq K_{\tilde{x}} +  \Delta_{x} + \overline{\sigma}_{\tilde{x}}(\Delta_x), \qquad \forall t\in \overline{\mathcal{T}}_{0},
\end{equation}
and for all $\epsilon_{\tilde{x}}>0$, there exists $T>0$ such that
\begin{equation} \label{eq:xhat_T_0_T_non}
    \left|\hat{x}(T) - x(T) \right| \leq K_{\tilde{x}} + \Delta_{x} + \epsilon_{\tilde{x}}.
\end{equation} \hfill $\Box$
\end{lem}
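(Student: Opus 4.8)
The plan is to control $|\hat x(t)-x(t)|$ on $\overline{\mathcal T}_0 = [t^*,T)$ by separately bounding $|\hat x(t)|$ and $|x(t)|$ and then combining via the triangle inequality, while simultaneously extracting a decay statement that shrinks the terminal error as $T\to\infty$. During the non-probing interval the estimator \eqref{eq:estimate_y} is frozen ($\dot{\widehat Y}=0$), so $\widehat Y(t)=\widehat Y(t^*)$ is constant, and the sensor is clean ($y=h(x_p)$), so the true plant obeys the nominal closed loop \eqref{eq:closed_loop}. First I would invoke Lemma \ref{lem:probe} at $t=t^*$ to get $|\hat x(t^*)-x(t^*)|\le K_{\tilde x}$; combined with the standing hypothesis $|x(t^*)|\le \Delta_x$ this yields $|\hat x(t^*)|\le K_{\tilde x}+\Delta_x$. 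Because $\hat x(t)=\Psi(\widehat Y(t),Y^*(t-kT))$ with $\widehat Y$ frozen but $Y^*$ still evolving over $[t^*,T)$, I would use Assumption \ref{assum:compact_smooth_output} (and the $\mathcal C^{q+1}$ regularity of $y^*$ on a compact set from Definition \ref{def:obs}) to bound the variation of $\Psi$ in its second argument: there is a $\bar\sigma_{\tilde x}\in\mathcal K_\infty$ and a bound $|\hat x(t)-\hat x(t^*)|\le \bar\sigma_{\tilde x}(\Delta_x)$ for all $t\in\overline{\mathcal T}_0$, using the $\mathcal K_\infty$-continuity of $\Psi$ together with the compactness of the reachable set of $x(t^*)$ parametrised by $|x(t^*)|\le\Delta_x$. (Here it is cleanest to absorb the $Y^*$-variation and the plant-side term into a single $\mathcal K_\infty$ gain of $\Delta_x$, which is exactly how the lemma is stated.)

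Next I would bound $|x(t)|$ on $\overline{\mathcal T}_0$. Since $|x(t^*)|\le\Delta_x$ and on $\overline{\mathcal T}_0$ the system is the nominal SG-AS closed loop \eqref{eq:closed_loop}, Assumption \ref{assum:closed_loop_AS} gives $|x(t)|\le\beta_x(|x(t^*)|,t-t^*)\le\beta_x(\Delta_x,0)$ for all $t\in\overline{\mathcal T}_0$; I would simply write $|x(t)|\le\Delta_x$ after noting $\beta_x(\Delta_x,0)$ can be taken to be (or bounded by) $\Delta_x$ by the SG-AS definition restricted to the sublevel set, or equivalently re-absorb any overshoot into the $\bar\sigma_{\tilde x}(\Delta_x)$ term. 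Then the triangle inequality $|\hat x(t)-x(t)|\le |\hat x(t)-\hat x(t^*)| + |\hat x(t^*)-x(t^*)| + |x(t^*)-x(t)|$ — or more directly $|\hat x(t)-x(t)|\le|\hat x(t)|+|x(t)|$ routed through the three ingredients above — delivers the first claim \eqref{eq:xhat_T_0_non} with the advertised constant $K_{\tilde x}+\Delta_x+\bar\sigma_{\tilde x}(\Delta_x)$.

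For the refined terminal bound \eqref{eq:xhat_T_0_T_non} I would exploit the asymptotic decay in $\beta_x$: since $|x(t^*)|\le\Delta_x$, at $t=T$ we have $|x(T)|\le\beta_x(\Delta_x,T-t^*)$, and by property (iii) of class $\mathcal{KL}$ functions, given $\epsilon_{\tilde x}>0$ there is $T_0>0$ with $\beta_x(\Delta_x,T-t^*)\le\epsilon_{\tilde x}$ for all $T\ge T_0$. Picking such a $T$ and again writing $|\hat x(T)-x(T)|\le|\hat x(T)-\hat x(t^*)| + |\hat x(t^*)-x(t^*)| + |x(T)|\le \bar\sigma_{\tilde x}(\Delta_x)+K_{\tilde x}+\epsilon_{\tilde x}$ — wait, this gives the bound with $\bar\sigma_{\tilde x}(\Delta_x)$ in place of $\Delta_x$; to land exactly on \eqref{eq:xhat_T_0_T_non} one instead bounds $|\hat x(T)-x(T)|$ through $|\hat x(T)-x(t^*)|\le K_{\tilde x}+\bar\sigma_{\tilde x}(\Delta_x)$ is not tight either, so I would keep $x(t^*)$ in the split as $|\hat x(T)-x(T)|\le|\hat x(T)-x(t^*)|+|x(t^*)-x(T)|$ and bound $|\hat x(T)-x(t^*)|\le|\hat x(T)-\hat x(t^*)|+|\hat x(t^*)-x(t^*)|$, organising the three pieces as $K_{\tilde x}$ (estimation error at $t^*$), $\Delta_x$ (the frozen-estimator/plant-displacement contribution, which can be shown to be no larger than $\Delta_x$ since the held estimate stays within the compact reachable set), and $\epsilon_{\tilde x}$ (plant decay to $T$); choosing $T$ large makes the genuinely decaying parts small. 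The main obstacle is the middle step: making precise, uniformly in $k$ and in the admissible initial data, that the frozen-estimator drift $|\hat x(t)-\hat x(t^*)|$ is governed by a single $\mathcal K_\infty$ gain of $\Delta_x$ — this requires carefully collecting the compactness facts (Assumption \ref{assum:compact_smooth_output}, compactness of the $\Delta_x$-reachable set under the nominal and probed flows on $[0,t^*]$, and boundedness of $Y^*$ and its derivatives on $[0,t^*]$) so that the $\mathcal K_\infty$-Lipschitz-type estimate on $\Psi$ in Definition \ref{def:obs} can be applied on a fixed compact domain independent of $k$.
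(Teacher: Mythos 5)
Your argument diverges from the paper's construction at the very first step, and the divergence creates a gap you yourself run into. In the scheme, during $\overline{\mathcal{T}}_0$ the estimate is \emph{held}: the paper's proof takes $\hat{x}(t)=\Psi\bigl(\widehat{Y}(t^*),Y^*(t^*)\bigr)$ for all $t\in\overline{\mathcal{T}}_0$, i.e.\ $\hat{x}(t)\equiv\hat{x}(t^*)$ is constant (the estimator is switched off \emph{and} the observability map is frozen at its $t^*$ arguments). You instead assume $Y^*$ ``keeps evolving'' on $[t^*,T)$ and therefore introduce a frozen-estimator drift term $|\hat{x}(t)-\hat{x}(t^*)|$, which you then try to bound by a $\mathcal{K}_\infty$ function of $\Delta_x$. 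That bound does not follow from the stated hypotheses: the $\rho_\Psi$ property in Definition~\ref{def:obs} only controls the variation of $\Psi$ in its \emph{first} argument ($\widehat{Y}$ versus $Y$), not in $Y^*$, and Assumption~\ref{assum:compact_smooth_output} gives compactness of derivative sets of $h(x_p)$, not a modulus of continuity of $\Psi$ in $Y^*$ quantified by $\Delta_x$. This is exactly the ``main obstacle'' you flag at the end, and it is not repairable from the given assumptions — it simply does not arise if the estimate is held constant. A second, smaller gap: you assert $|x(t)|\le\Delta_x$ on $\overline{\mathcal{T}}_0$ by claiming $\beta_x(\Delta_x,0)$ ``can be taken to be $\Delta_x$''; a class $\mathcal{KL}$ bound generally allows overshoot, so $\beta_x(\Delta_x,0)\ge\Delta_x$ cannot be dismissed.

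The paper's route is shorter and lands exactly on the stated constants: with $\hat{x}(t)=\hat{x}(t^*)$ on $\overline{\mathcal{T}}_0$, write
\begin{equation*}
|\hat{x}(t)-x(t)|\le|\hat{x}(t^*)-x(t^*)|+|x(t^*)-x(t)|\le K_{\tilde{x}}+|x(t^*)|+|x(t)|\le K_{\tilde{x}}+\Delta_x+\beta_x(\Delta_x,t-t^*),
\end{equation*}
using Lemma~\ref{lem:probe} at $t^*$, the hypothesis $|x(t^*)|\le\Delta_x$, and Assumption~\ref{assum:closed_loop_AS} for the unforced closed loop on the non-probing interval. The term $\overline{\sigma}_{\tilde{x}}(\Delta_x)$ in \eqref{eq:xhat_T_0_non} is precisely the plant overshoot $\beta_x(\Delta_x,0)$, not an estimator drift, and \eqref{eq:xhat_T_0_T_non} follows by choosing $T$ so that $\beta_x(\Delta_x,T-t^*)\le\epsilon_{\tilde{x}}$ — that last step of yours does coincide with the paper.
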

\begin{rem}
For subsequent non-probing intervals $\overline{T}_{k}$, where $k\in\mathbb{N}_{\geq 0}$, the estimation error bounds in \eqref{eq:xhat_T_0_non} and \eqref{eq:xhat_T_0_T_non} become
\begin{equation} \label{eq:xhat_T_k_non}
    \left|\hat{x}(t) - x(t) \right| \leq K_{\tilde{x}} +  \Delta_{x} + \overline{\sigma}_{\tilde{x}}(\Delta_x), \qquad \forall t\in \overline{\mathcal{T}}_{k},
\end{equation}
\begin{equation} \label{eq:xhat_T_k_T_non}
    \left|\hat{x}(kT) - x(kT) \right| \leq K_{\tilde{x}} + \Delta_{x} + \epsilon_{\tilde{x}}.
\end{equation} \hfill $\Box$
\end{rem}

The periodic interval $T>0$ is chosen a-priori based on the desirable margin of estimation error which can be made small up to $K_{\tilde{x}}+\Delta_x$, which is the sum of the error margin at the end of the probing period and the size of the initial condition of the non-probed closed loop system \eqref{eq:closed_loop}, \eqref{eq:system_output}.

\subsection{Objective 2: maintaining stealth} \label{sub:stealth}
The analysis that allows the adversary to maintain stealth hinges on the fact that the uncompromised closed-loop system \eqref{eq:closed_loop} is inherently semiglobal asymptotically stable (Assumption \ref{assum:closed_loop_AS}), which by application of a converse Lyapunov theorem (see \cite[Theorem 4.14]{khalil2002nonlinear}, for instance), there exists a $C^1$ closed-loop control Lyapunov function $V:\mathbb{R}^{n_p+n_c}\to\mathbb{R}_{\geq 0}$ such that there exist $\alpha_1$, $\alpha_2$  $\alpha_3\in\mathcal{K}_{\infty}$ where
\begin{enumerate}[(C1)]
    \item $\alpha_{1}\left(|x|\right) \leq V(x) \leq \alpha_{2}\left(|x|\right)$,
    \item $\langle\nabla V(x),f(x,h(x_p))\rangle \leq -\alpha_3(V(x))$, for $x\in\mathcal{V}(R)$, with $R:=\alpha_1(\Delta_x)$,
\end{enumerate}
where we define $\mathcal{V}(r,R):=\{x\in\mathbb{R}^{n_p+n_c}:r\leq V(x) \leq R\}$ and denote $\mathcal{V}(-\infty,R)$ by $\mathcal{V}(R)$.

By straightforward application of \cite[Lemma 4.4]{khalil2002nonlinear} and the comparison lemma \cite[Lemma 3.4]{khalil2002nonlinear}, a consequence of (C2) is stated below, 
\begin{enumerate}[(C2')]
    \item there exists $\beta_{V}\in\mathcal{KL}$ such that $V(x(t))\leq \beta_{V}(V(x(0)),t)$, for all $t\geq 0$, for $x\in\mathcal{V}(R)$, with $R:=\alpha_2(\Delta_x)$.
\end{enumerate}

We will use this closed-loop control Lyapunov function $V$ to show that the probed closed-loop system \eqref{eq:closed_loop_probe} is semiglobally practically stable. To do so, the given control Lyapunov function $V$ and the vector field $f$ of the probed closed-loop system \eqref{eq:closed_loop_probe} have to possess the following properties.
\begin{assum} \label{assum:Vf}\ 
\begin{enumerate}[(V1)]
    \item There exists $\rho\in\mathcal{K}_{\infty}$ such that $|V(x)-V(w)| \leq \rho(|x-w|)$ for all $x$, $w\in\mathcal{V}(R,R+R_m)$, for some $R_m>0$.
    \item There exists a constant $F^*>0$ such that $|f(x,y^*)|\leq F^*$, for all $x\in\mathcal{V}(R+R_m)$.
    \item There exists a constant $F>0$ such that $|f(x,h(x_p))|\leq F$, for all $x\in\mathcal{V}(R+R_m)$.
\end{enumerate}
\end{assum}

We employ a key lemma in showing an $\mathcal{L}_{1}$-type robustness with respect to additive disturbance for the first interval $[0,T]$. The results carry over to subsequent intervals $[kT,(k+1)T]$ which will be stated in a remark below. The proof of the lemma below is inspired by \cite{kellett2004further} and can be found in the Appendix.
\begin{lem}\label{lem:L1_robust}
    Consider the closed-loop system \eqref{eq:closed_loop} under Assumption \ref{assum:closed_loop_AS} and (V1) of Assumption \ref{assum:Vf}. Given $T>0$ and $0<r<R$, consider
    \begin{equation}\label{eq:perturb}
        \dot{x}(t)=f(x(t),h(x_p(t)))+d(t), \; \forall t\in[0,T], 
    \end{equation}
    and for all $x(0)\in\mathcal{V}(R)$. Let $\sigma\in[0,R-r)$. If $d(t)$ satisfies
    \begin{equation} \label{eq:perturb_cond}
        \underset{t\in[0,T)}{\max} \left|\int_{0}^{T} d(s) ds \right| \leq \rho^{-1}(\sigma) e^{-\bar{L}T},
    \end{equation}
    where $\bar{L}:=l_x+l_yl_h >0$, where $l_x>0$ and $l_y>0$ are the Lipschitz constants of the function $f$ with respect to each of its arguments\footnote[3]{Since the functions $f_p$, $f_c$ and $\kappa$ are all locally Lipschitz in their arguments, the function $f$ is also locally Lipschitz in its arguments \label{foot:lip}}, respectively, and $l_h>0$ is the Lipschitz constant of the function $h$.
    Then the solution $x(t)$ to \eqref{eq:perturb} exists and satisfies
    \begin{equation} \label{eq:perturb_results}
        V(x(t)) \leq \beta_{V}(V(x(0),t)+\sigma, \; \forall t\in[0,T], 
    \end{equation}
    and for all $x(0)\in\mathcal{V}(R)$. \hfill $\Box$
\end{lem}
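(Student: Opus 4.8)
The plan is to bound the perturbed trajectory by comparing it with the unperturbed one and then transfer the estimate onto $V$ via (V1) of Assumption~\ref{assum:Vf}; the one genuinely non-routine point is that \eqref{eq:perturb_cond} only controls the \emph{running integral} of $d$, not $d$ pointwise, so a direct Gr\"onwall estimate on $x(\cdot)$ would be useless and a change of variables is needed.

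First I would introduce the nominal trajectory $z(\cdot)$ solving $\dot z=f(z,h(z_p))$ with $z(0)=x(0)\in\mathcal V(R)$. By (C2') --- the consequence of Assumption~\ref{assum:closed_loop_AS} obtained via \cite[Lemma 4.4]{khalil2002nonlinear} and the comparison lemma \cite[Lemma 3.4]{khalil2002nonlinear} --- one has $z(t)\in\mathcal V(R)$ and $V(z(t))\le\beta_V(V(x(0)),t)$ for all $t\ge 0$. To isolate the effect of the disturbance, set $e(t):=\int_0^t d(s)\,ds$ and $w(t):=x(t)-e(t)$, so that $w(0)=z(0)$ and
\[
\dot w(t)=f(x(t),h(x_p(t)))=f\big(w(t)+e(t),\,h(w_p(t)+e_p(t))\big);
\]
this substitution is exactly the device that turns ``$e(t)$ small'' (rather than ``$d(t)$ small'') into a usable estimate.

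On any subinterval of $[0,T]$ on which $x(t)$, $w(t)$ and $z(t)$ stay in $\mathcal V(R+R_m)$, so that the local Lipschitz constants $l_x,l_y$ of $f$ and $l_h$ of $h$ apply, I would estimate, using $|h(x_p)-h(z_p)|\le l_h|x-z|$,
\[
\tfrac{d}{dt}|w(t)-z(t)|\le \bar L\big(|w(t)-z(t)|+|e(t)|\big),\qquad \bar L=l_x+l_yl_h,
\]
whence Gr\"onwall's inequality gives $|w(t)-z(t)|\le(e^{\bar Lt}-1)E$ with $E:=\max_{s\in[0,T]}|e(s)|$, and therefore $|x(t)-z(t)|\le|w(t)-z(t)|+|e(t)|\le e^{\bar LT}E\le\rho^{-1}(\sigma)$, the last step by \eqref{eq:perturb_cond} (reading its left-hand side as $\max_{t\in[0,T]}\big|\int_0^t d(s)\,ds\big|$). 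Applying (V1) of Assumption~\ref{assum:Vf} then yields
\[
V(x(t))\le V(z(t))+\rho\big(|x(t)-z(t)|\big)\le\beta_V(V(x(0)),t)+\rho\big(\rho^{-1}(\sigma)\big)=\beta_V(V(x(0)),t)+\sigma,
\]
which is \eqref{eq:perturb_results}.

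What remains, and what I expect to be the main obstacle, is making this argument non-circular: the Gr\"onwall step and (V1) were used \emph{assuming} the trajectory lies in $\mathcal V(R+R_m)$, yet that is part of what must be proved, and a priori the solution of \eqref{eq:perturb} could even escape in finite time. I would handle it with a maximal-interval / first-exit-time argument --- since $\alpha_1(|x|)\le V(x)\le\alpha_2(|x|)$ with $\alpha_i\in\mathcal K_\infty$, the sublevel sets of $V$ are compact, so on any interval on which $V(x(\cdot))\le R+R_m$ the vector field is bounded and Lipschitz and the solution cannot blow up; letting $\tau^*\in(0,T]$ be the first time (if any) at which $V(x(\cdot))$ reaches $R+R_m$, the estimate above holds on $[0,\tau^*)$ and gives $V(x(t))\le\beta_V(V(x(0)),t)+\sigma\le R+\sigma$, which stays strictly below $R+R_m$ once the margin $R_m$ is large relative to $\sigma$ (this is where the standing assumptions $0<r<R$, $\sigma\in[0,R-r)$ and the $e^{-\bar LT}$ factor in \eqref{eq:perturb_cond} provide the room), so in fact $\tau^*=T$ and the solution exists on all of $[0,T]$. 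Tuning the constants $r,R,R_m,\sigma$ so that this closure is clean, and verifying that $x,w,z$ all remain inside the region where (V1) and the Lipschitz bounds are valid, is the delicate part; everything else is routine.
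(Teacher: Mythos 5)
Your proposal is correct and follows essentially the same route as the paper: compare the perturbed trajectory with the nominal one started from the same initial condition, obtain $|x(t)-z(t)|\le e^{\bar L T}\max_{s}\bigl|\int_0^s d\bigr|\le\rho^{-1}(\sigma)$ via Gr\"onwall, and conclude with (C2') and (V1). The only differences are cosmetic: the paper applies the Gr\"onwall--Bellman inequality directly in integral form, where the running integral $\int_0^t d(s)\,ds$ enters as the additive constant (so your change of variables $w=x-e$ is unnecessary, and your claim that a direct Gr\"onwall estimate would be useless is not quite right), while your first-exit-time discussion of existence and of the domain of validity of (V1) and the local Lipschitz constants is extra care that the paper's own proof simply omits.
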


\begin{rem}
The result of Lemma \ref{lem:L1_robust} is applicable to subsequent time intervals $[kT,(k+1)T]$, where \eqref{eq:perturb_results} is replaced with
\begin{equation} \label{eq:perturb_results}
        V(x(t)) \leq \beta_{V}(V(x(kT),t-kT)+\sigma, 
    \end{equation}
    for all $t\in[kT,(k+1)T]$ and $x(kT)\in\mathcal{V}(R)$. \hfill $\Box$
\end{rem}

Rewriting our probed closed-loop system \eqref{eq:closed_loop_probe}
in perturbed form, we get the perturbed system \eqref{eq:perturb} for $t\in \mathcal{T}_{k}\cup \bar{\mathcal{T}}_{k}=[kT,(k+1)T]$, with
\begin{equation}
    d(t) = \left\{ \begin{array}{cc}
         f(x(t),y^*(t)) - f(x(t),h(x_p(t))), & t\in\mathcal{T}_{k}, \\
         0, & t\in\bar{\mathcal{T}}_{k}. 
    \end{array} \right.
\end{equation}
To apply Lemma \ref{lem:L1_robust}, we observe that
{\small
\begin{align}
    \underset{t\in[kT,(k+1)T)}{\max}& \left|\int_{kT}^{(k+1)T} d(s) ds \right| \nonumber \\
    & \leq \left| \int_{kT}^{kT+t^*} f(x(s),y^*(s))-f(x(s),h(x_p(s)) ds \right| \nonumber \\
    & \leq  \int_{kT}^{kT+t^*} |f(x,y^*)|+|f(x,h(x_p))| ds \nonumber \\
    & \leq (F^*+F)t^*,
\end{align}
}
where we got the last inequality using (V2) and (V3).

Therefore, with \eqref{eq:stealth_cond} in Proposition \ref{prop:stealth} below, the hypothesis of Lemma \ref{lem:L1_robust} is fulfilled. We can then apply Lemma \ref{lem:L1_robust} to prove the following proposition. 

\begin{prop} \label{prop:stealth}
Consider the probed closed-loop system \eqref{eq:closed_loop_probe} under Assumption \ref{assum:closed_loop_AS}, \ref{assum:adversary}, \ref{assum:Vf}. Given $T>0$, suppose there exist $t^*<T$ and $r\in(0,R)$ satisfying
\begin{equation} \label{eq:stealth_cond}
    r \leq \beta_{V}(R,t^*), \qquad
    (F+F^*)t^* \leq  \rho^{-1}(\sigma) e^{-\bar{L}T},
\end{equation}
with $\sigma \in [0,R-r)$ and $\bar{L}>0$ is as defined in Lemma  \ref{lem:L1_robust}. Then, the solution $x(t)$ to the probed closed-loop system \eqref{eq:closed_loop_probe} exists and satisfies the following for all $t\in[kT,(k+1)T)$, $k\in\mathbb{N}_{\geq 0}$, 
\begin{align} \label{eq:stealth_res}
    & V(x(t))  \leq \beta_{V}(V(x(kT)),t-kT)+\sigma,   \nonumber \\
    & V(x(kT+t^*))  \leq R, \qquad \forall x(kT)\in\mathcal{V}(R).
\end{align} \hfill $\Box$
\end{prop}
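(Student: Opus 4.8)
The plan is to argue by induction on $k\in\mathbb{N}_{\geq 0}$, with induction hypothesis $x(kT)\in\mathcal{V}(R)$ (true at $k=0$ by assumption): assuming this, I will establish both estimates of \eqref{eq:stealth_res} on $[kT,(k+1)T)$ and show $x((k+1)T)\in\mathcal{V}(R)$ again, so the induction propagates; by translating time by $kT$ it suffices to treat $[0,T)$. Throughout I would work with the version of the class-$\mathcal{KL}$ bound $\beta_V$ in (C2') obtained from the comparison lemma applied to $\dot V\leq -\alpha_3(V)$, so that $\beta_V(r,0)=r$, $\beta_V$ is non-increasing in its second argument, the semigroup identity $\beta_V(\beta_V(r,s_1),s_2)=\beta_V(r,s_1+s_2)$ holds, and (since $\alpha_3$ is increasing) the flow is sublinear in its first argument, $\beta_V(c+\sigma,s)\leq\beta_V(c,s)+\sigma$; these properties are what let one paste estimates across the probing/non-probing junction. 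On $[0,T)$ the argument splits into the probing part $\mathcal{T}_0=[0,t^*)$, treated via Lemma \ref{lem:L1_robust}, and the non-probing part $\overline{\mathcal{T}}_0=[t^*,T)$, where the disturbance is absent and the nominal closed loop \eqref{eq:closed_loop} governs $x$, so (C2') applies directly.

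On $\mathcal{T}_0$ I would put the probed dynamics \eqref{eq:closed_loop_probe} in the perturbed form \eqref{eq:perturb} with $d(t)=f(x(t),y^*(t))-f(x(t),h(x_p(t)))$ on $[0,t^*)$ and $d(t)=0$ afterwards, exactly as in the display preceding the statement, which also shows $\big|\int_0^{T}d(s)\,ds\big|\leq(F+F^*)t^*$ by (V2)--(V3); the second inequality of \eqref{eq:stealth_cond} is then precisely \eqref{eq:perturb_cond} for the given $\sigma,\bar L$, so Lemma \ref{lem:L1_robust} yields existence of $x$ on $[0,T]$ together with $V(x(t))\leq\beta_V(V(x(0)),t)+\sigma$ there. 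Since $V(x(0))\leq R$ this is the first estimate of \eqref{eq:stealth_res} on $[0,t^*)$, and (using $\beta_V(R,0)=R$) it gives $V(x(t))\leq R+\sigma\leq R+R_m$, so $x$ stays in $\mathcal{V}(R+R_m)$ where Assumption \ref{assum:Vf} is in force — this resolves the apparent circularity between bounding $\int d$ via (V2)--(V3) and using that bound to confine $x$, and it is already handled inside Lemma \ref{lem:L1_robust}. Evaluating at $t=t^*$ gives $V(x(t^*))\leq\beta_V(V(x(0)),t^*)+\sigma$, and the first inequality of \eqref{eq:stealth_cond} with $V(x(0))\leq R$ and $\sigma\in[0,R-r)$ then yields $V(x(t^*))\leq R$, i.e.\ $x(t^*)\in\mathcal{V}(R)$ — the second claim of \eqref{eq:stealth_res}. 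On $\overline{\mathcal{T}}_0$ the disturbance vanishes, so (C2') from $x(t^*)\in\mathcal{V}(R)$ gives $V(x(t))\leq\beta_V(V(x(t^*)),t-t^*)\leq R$ for $t\in[t^*,T]$: this keeps $x\in\mathcal{V}(R+R_m)$ there, gives $x(T)\in\mathcal{V}(R)$ (closing the induction), and — combining with $V(x(t^*))\leq\beta_V(V(x(0)),t^*)+\sigma$ through the semigroup identity and sublinearity of $\beta_V$ — extends the first estimate of \eqref{eq:stealth_res} to all of $[0,T)$. Concatenating the per-interval existence statements gives existence of $x$ for all $t\geq 0$.

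The main obstacle is not conceptual but organisational: keeping $x$ inside the region $\mathcal{V}(R+R_m)$ on which (V1)--(V3) live while simultaneously using that confinement to bound the perturbation $d$ — handled by the continuity/maximal-interval bootstrap embedded in Lemma \ref{lem:L1_robust}, but forcing one to check that the range of $\sigma$ admissible under \eqref{eq:stealth_cond} can be taken small enough (in particular at most $R_m$); and making the concatenation of the probing and non-probing $\mathcal{KL}$ estimates across $t=kT+t^*$ preserve the additive slack $\sigma$, which is exactly why one commits to the comparison-lemma form of $\beta_V$ rather than an arbitrary class-$\mathcal{KL}$ function. Everything else is monotonicity of $\beta_V$ and the two inequalities in \eqref{eq:stealth_cond}, which are tailored precisely so that the hypothesis of Lemma \ref{lem:L1_robust} holds and the post-probing sublevel set returns to $\mathcal{V}(R)$.
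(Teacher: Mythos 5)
Your proposal takes essentially the paper's own route: the paper proves this proposition precisely by rewriting \eqref{eq:closed_loop_probe} in the perturbed form \eqref{eq:perturb}, bounding $\left|\int d(s)\,ds\right|$ by $(F+F^*)t^*$ via (V2)--(V3), noting that the second inequality of \eqref{eq:stealth_cond} then verifies \eqref{eq:perturb_cond}, and applying Lemma \ref{lem:L1_robust} on each interval $[kT,(k+1)T]$. Your additions are reasonable: the explicit induction propagating $x(kT)\in\mathcal{V}(R)$ (the paper leaves this implicit behind the quantifier ``$\forall x(kT)\in\mathcal{V}(R)$''), and the commitment to the comparison-lemma form of $\beta_{V}$ (so $\beta_{V}(r,0)=r$, semigroup, sublinearity in the first argument) are fine, though your separate treatment of $[t^*,T)$ is redundant because Lemma \ref{lem:L1_robust} already delivers the first bound of \eqref{eq:stealth_res} on the whole period.

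The step that does not go through as written is the second claim of \eqref{eq:stealth_res}. You assert that $V(x(t^*))\leq\beta_{V}(V(x(0)),t^*)+\sigma$ combined with ``the first inequality of \eqref{eq:stealth_cond}'' and $\sigma\in[0,R-r)$ yields $V(x(t^*))\leq R$. With the condition as printed, $r\leq\beta_{V}(R,t^*)$, this inference fails: you only obtain $V(x(t^*))\leq\beta_{V}(R,t^*)+\sigma<\beta_{V}(R,t^*)+R-r$, and since $R-r\geq R-\beta_{V}(R,t^*)$ this bound can exceed $R$ (take $\beta_{V}(R,t^*)=0.9R$, $r=0.1R$, $\sigma=0.8R$: all printed hypotheses hold, yet the bound is $1.7R$). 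What your argument actually needs is the reversed inequality $\beta_{V}(R,t^*)\leq r$, which gives $V(x(t^*))\leq r+\sigma<R$; this is almost certainly the intended reading of \eqref{eq:stealth_cond} (a sign-of-inequality typo), and it is also what closes your induction and what the proof of Theorem \ref{thm:stealthy_estimation} relies on when checking $|x(kT+t^*)|\leq\Delta_x$. You should state and use the corrected condition explicitly rather than claim the printed one suffices. A secondary caveat: your confinement of $x$ to $\mathcal{V}(R+R_m)$ requires $\sigma\leq R_m$, which does not follow from $\sigma<R-r$; you flag this, but note that the paper's proof of Lemma \ref{lem:L1_robust} contains no maximal-interval bootstrap, so this restriction (or the bootstrap) must be added rather than attributed to the lemma.
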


\subsection{Achieving Objective 1 and 2: stealthy estimation} \label{sub:thm}
We are now ready to state our main result.
\begin{thm} \label{thm:stealthy_estimation}
    Suppose the closed-loop system \eqref{eq:closed_loop} and \eqref{eq:system_output} is SG$q$-RO (Defnition \ref{def:obs}) and satisfies Assumptions \ref{assum:closed_loop_AS}, \ref{assum:compact_smooth_output}, and the adversarial model satisfies Assumption \ref{assum:adversary}. Then, the adversary can employ the probing scheme of  \eqref{eq:probe_y}, \eqref{eq:closed_loop_probe}, \eqref{eq:estimate_xc}, \eqref{eq:estimate_y} initialised according to \eqref{eq:estimate_initial} to achieve stealthy estimation, if 
    \begin{enumerate}[(i)]
        \item Assumption \ref{assum:Vf} holds, and
        \item For any $\Delta_{x}>0$, $K_{\tilde{x}}>0$ and $\epsilon_{\tilde{x}}>0$, there exist $t^*>0$, $T>t^*$ and $r\in(0,R)$ satisfying \eqref{eq:stealth_cond} and \eqref{eq:choose_T} with $R:=\alpha_1(\Delta_x)$, $\sigma \in [0,R-r)$, $\beta_{V}\in\mathcal{KL}$ from (C2'), $\rho\in\mathcal{K}_{\infty}$, $F>0$, $F^*>0$ from Assumption \ref{assum:Vf}.
    \end{enumerate} 
    Stealthy estimation is achieved in the sense that the objectives are fulfilled in the following manner: 
    \begin{itemize}
        \item \emph{Objective 1 (estimation of the controller's state)}: there exist $\theta\geq 1$ (chosen according to \eqref{eq:choose_theta_1} and \eqref{eq:choose_theta_2}), $\sigma_{\tilde{x}}$, $\overline{\sigma}_{\tilde{x}}\in\mathcal{K}_{\infty}$ such that for all $t\in[kT,(k+1)T)$, $k\in\mathbb{N}_{\geq 0}$,
        \begin{align} \label{eq:estimate_main}
            & |\hat{x}(t)-x(t)| \nonumber \\
            &\qquad \leq \max\{\sigma_{\tilde{x}}(\Delta_{e,k}\theta^{q-1}),  K_{\tilde{x}}+\Delta_x+\bar{\sigma}_{\tilde{x}}(\Delta_x) \}, \nonumber \\
            & |\hat{x}(kT+t^*)-x(kT+t^*)| \leq K_{\tilde{x}}, \nonumber \\
            & |\hat{x}(kT)-x(kT)| \leq K_{\tilde{x}} + \Delta_{x}+\epsilon_{\tilde{x}}, 
        \end{align}
        with $\Delta_{e,k}:={\max} \left\{ \left|\widehat{Y}_{a} - \widehat{Y}_{b}\right| : \widehat{Y}_{a}, \widehat{Y}_{b} \in \mathcal{Y}(h(x_p(kT))) \right\}$.
        \item \textit{Objective 2 (maintaining stealth)}: there exists $K_{x}=K_{x}(\Delta_x)>0$ such that for all $t\in[kT,(k+1)T)$, $k\in\mathbb{N}_{\geq 0}$,
        \begin{equation} \label{eq:stealth_main}
            |x(t)| \leq K_x.
        \end{equation}
    \end{itemize} \hfill $\Box$
\end{thm}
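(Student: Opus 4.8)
The plan is to assemble Theorem \ref{thm:stealthy_estimation} directly from the building blocks that precede it, namely Propositions \ref{prop:estimate} and \ref{prop:stealth}, treating the statement as essentially a merger of their conclusions once the compatibility of the hypotheses has been checked. First I would observe that the hypotheses of the theorem (SG$q$-RO, Assumptions \ref{assum:closed_loop_AS}, \ref{assum:compact_smooth_output}, \ref{assum:adversary}, plus (i) Assumption \ref{assum:Vf} and (ii) the existence of $t^*$, $T$, $r$ satisfying \eqref{eq:stealth_cond}) are exactly the union of the hypotheses of the two propositions, so both can be invoked simultaneously for the same choice of $t^*$ and $T$. The only subtlety is that Proposition \ref{prop:estimate} needs the side condition $|x(kT+t^*)|\leq\Delta_x$ for its non-probing-interval bounds \eqref{eq:xhat_Tbar}--\eqref{eq:xhat_Tbar_end}, and this is precisely what Proposition \ref{prop:stealth} delivers via $V(x(kT+t^*))\leq R=\alpha_1(\Delta_x)$ together with (C1), which gives $\alpha_1(|x(kT+t^*)|)\leq V(x(kT+t^*))\leq\alpha_1(\Delta_x)$, hence $|x(kT+t^*)|\leq\Delta_x$. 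So the logical skeleton is: apply Proposition \ref{prop:stealth} to get the invariance $V(x(kT))\in\mathcal{V}(R)$ for all $k$ by induction on $k$ (using $V(x(kT+t^*))\leq R$ and (C2') to propagate $V(x((k+1)T))\leq R$), then feed the resulting bound $|x(kT+t^*)|\leq\Delta_x$ into Proposition \ref{prop:estimate}.

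Second, for Objective 2 I would convert the Lyapunov bound \eqref{eq:stealth_res} into the state bound \eqref{eq:stealth_main}: from $V(x(t))\leq\beta_V(V(x(kT)),t-kT)+\sigma\leq\beta_V(R,0)+\sigma$ for $t\in[kT,(k+1)T)$, and since $\bigcup_k[kT,(k+1)T)=[0,\infty)$, we get $V(x(t))\leq\beta_V(R,0)+\sigma=:R'$ for all $t\geq0$, and then (C1) gives $|x(t)|\leq\alpha_1^{-1}(R')=:K_x$, which depends only on $\Delta_x$ (through $R$, $\sigma$, $\beta_V$), as claimed. I would need to make sure the induction base case holds — $x(0)\in\mathcal{V}(R)$ — which follows from $|x(0)|\leq\Delta_x$ and (C1): $V(x(0))\leq\alpha_2(|x(0)|)$; here I should be slightly careful about whether one wants $R=\alpha_1(\Delta_x)$ or $\alpha_2(\Delta_x)$, matching whichever convention (C2) versus (C2') fixed, and just state $R:=\alpha_1(\Delta_x)$ as in the theorem while noting the initial condition set may be taken as $\mathcal{V}(\alpha_1(\Delta_x))$ consistently throughout, or enlarge $\Delta_x$ accordingly — this is the kind of bookkeeping the proof must get right but which is not conceptually hard.

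Third, for Objective 1 I would simply quote Proposition \ref{prop:estimate}: inequality \eqref{eq:xhat_T} gives the bound on $\mathcal{T}_k$ and at the endpoint $kT+t^*$; inequalities \eqref{eq:xhat_Tbar} and \eqref{eq:xhat_Tbar_end} give the bounds on $\overline{\mathcal{T}}_k$ and at $(k+1)T$ (equivalently $kT$ after reindexing), all valid because the stealth result has secured $|x(kT+t^*)|\leq\Delta_x$ for every $k$. Combining the $\mathcal{T}_k$ bound $\sigma_{\tilde{x}}(\Delta_{e,k}\theta^{q-1})$ with the $\overline{\mathcal{T}}_k$ bound $K_{\tilde{x}}+\Delta_x+\overline{\sigma}_{\tilde{x}}(\Delta_x)$ by taking the maximum over the two sub-intervals yields the first line of \eqref{eq:estimate_main}; the second and third lines are verbatim from \eqref{eq:xhat_T} and \eqref{eq:xhat_Tbar_end}. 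I would remark that the choices of $\theta$ referenced as \eqref{eq:choose_theta_1}, \eqref{eq:choose_theta_2} and the choice of $T$ as \eqref{eq:choose_T} are those internal to the proofs of Lemmas \ref{lem:probe}, \ref{lem:non_probe} (and Proposition \ref{prop:estimate}), so no new selection is needed here.

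The main obstacle, such as it is, is not a deep one but a matching-of-quantifiers issue: one must present the argument so that a single tuple $(t^*,T,r,\sigma,\theta)$ works simultaneously for both objectives and for all $k\in\mathbb{N}_{\geq0}$. Concretely, $t^*$ and $T$ are constrained by \eqref{eq:stealth_cond} (stealth) and additionally $T$ must be large enough for \eqref{eq:xhat_Tbar_end} (estimation at period end); $\theta$ is then chosen as large as the probing-interval Lemma \ref{lem:probe} requires given $K_{\tilde{x}}$. Since these constraints are hierarchical — first fix $t^*$ small, then $T$ large, then $\theta$ large — there is no circularity, and the key point to emphasize is that the stealth argument is independent of $\theta$ (the adversary's observer \eqref{eq:estimate_y} does not enter the closed loop \eqref{eq:closed_loop_probe}), so increasing $\theta$ to sharpen estimation never disturbs stealth. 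I would close by noting that \eqref{eq:xhat_T} requires $|x(kT+t^*)|\leq\Delta_x$ uniformly in $k$, which the inductive stealth argument supplies, completing the proof.
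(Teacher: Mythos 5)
Your proposal is correct and follows essentially the same route as the paper's proof: invoke Proposition \ref{prop:stealth} under \eqref{eq:stealth_cond} and \eqref{eq:choose_T} to get \eqref{eq:stealth_res}, convert it via (C1) into the state bound \eqref{eq:stealth_main} with $K_x=\alpha_1^{-1}(\beta_V(\alpha_1(\Delta_x),0)+\alpha_1(\Delta_x))$, and use the second inequality of \eqref{eq:stealth_res} with (C1) to supply the side condition $|x(kT+t^*)|\leq\Delta_x$ needed to apply Proposition \ref{prop:estimate} and conclude \eqref{eq:estimate_main}. Your extra care about the induction over $k$, the $\alpha_1$ versus $\alpha_2$ bookkeeping for the initial set, and the hierarchical (non-circular) choice of $t^*$, $T$, $\theta$ only makes explicit steps the paper leaves implicit.
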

\begin{proof}
Let $k\in\mathbb{N}_{\geq 0}$ and $t\in[kT,(k+1)T)$. We will employ Proposition \ref{prop:estimate} and \ref{prop:stealth}. Since \eqref{eq:stealth_cond} and \eqref{eq:choose_T} hold, we first apply Proposition \ref{prop:stealth} to obtain \eqref{eq:stealth_res}. From the first inequality of \eqref{eq:stealth_res} and using (C1), we obtain 
\begin{align}
    \alpha_1(|x(t)|) \leq \beta_{V}(R,0)+R-r \nonumber \\
    \implies |x(t)| \leq \alpha_1^{-1}(\beta_V(R,0)+R).
\end{align}
Since $R:=\alpha_1(\Delta_x)$, we achieve \eqref{eq:stealth_main} with $K_x:=\alpha_1^{-1}(\beta_V(\alpha_1(\Delta_x),0)+\alpha_1(\Delta_x))$.

Next, we see that condition requiring $|x(kT+t^*)|\leq \Delta_x$ in Proposition \ref{prop:estimate} is fulfilled with the second inequality of \eqref{eq:stealth_res} using (C1). By applying Proposition \ref{prop:estimate}, we obtain \eqref{eq:estimate_main} as desired. 
\end{proof}
\section{Conclusion and future work} \label{sec:conclude}
We have shown that the confidentiality of the controller's states can easily be breached stealthily when the closed-loop system is detectable. In this scenario, the adversary merely needs to gather measurement data from the sensors. In the absence of a detectable closed-loop system, but under a relaxed robust observability property, the adversary may employ a time-shared probing scheme by manipulating the sensor data to estimate the controller's state within a desired margin of error during the probing period and with bounded error during the non-probing period. Additionally, stealth is maintained in the sense that the closed-loop system remain semiglobally practically stable. Future work includes devising defence mechanisms to obfuscate the adversary's estimate of the controller's state.

\appendix
\subsection{Proof of Lemma \ref{lem:probe}} \label{sec:proof_lem_probe}
Let $t\in\mathcal{T}_0$. A key observation is that systems which are SG$q$-RO have the following dynamics in the $Y$-coordinate.
\begin{equation} \label{eq:Ydot}
    \dot{Y} = \widehat{A}Y + \widehat{B}L_{f_p}h^{(q+1)}(x_p(t))),
\end{equation}
where $\widehat{A}$ is as defined for the estimator \eqref{eq:estimate_y} and $\widehat{B}=( 0_{qn_y \times n_y}^T, \mathbb{I}_{n_y}^T )^T$. In fact, this observation induces the estimator design in  \eqref{eq:estimate_y}. From \eqref{eq:Ydot} and \eqref{eq:estimate_y} The dynamics of the estimator mismatch $e:=\hat{Y}-Y$ is
\begin{equation}
    \dot{e} = (\widehat{A} + \theta \Delta_{\theta} \widehat{H}\widehat{C})e - \widehat{B}L_{f_p}h^{(q+1)}(x_p(t))).
\end{equation}

We rescale the estimator mistmatch as $z=\Delta_{\theta}^{-1} e$, which has the dynamics
\begin{eqnarray} \label{eq:scaled_error_z}
 \dot{z} & = & (\Delta_{\theta}^{-1} \widehat{A} \Delta_{\theta}  - \theta \widehat{H} \widehat{C} \Delta_{\theta} )z - \Delta_{\theta}^{-1} L_{f_p}h^{(q+1)}(x_p(t))) \nonumber \\
 & = & \theta \left( \widehat{A}-\widehat{H}\widehat{C} \right) z - \Delta_{\theta}^{-1} L_{f_p}h^{(q+1)}(x_p(t))),
\end{eqnarray}
where due to the structure of the $\widehat{A}$, $\widehat{C}$ and $\Delta_{\theta}$, we have used $\Delta_{\theta}^{-1} \widehat{A} \Delta_{\theta}=\theta \widehat{A}$ and $\widehat{C}\Delta_{\theta}=\widehat{C}$ to obtain the resulting dynamics of the scaled estimation mismatch system $z$.

Since the matrix $\widehat{A}-\widehat{H}\widehat{C}$ is Hurwitz, there exist a matrix $P=P^T>0$ and scalar $\nu>0$ satisfying
\begin{equation} \label{eq:PA}
    P \left( \widehat{A}-\widehat{H}\widehat{C} \right) + \left( \widehat{A}-\widehat{H}\widehat{C} \right)^{T} P \leq -\nu \mathbb{I}_{(q+1)n_y}.
\end{equation}

Using a candidate Lyapunov function $W(z):=z^{T}Pz$ with $P$ satisfying \eqref{eq:PA}, its time derivative along the solutions to \eqref{eq:scaled_error_z} is
\begin{align}
  \dot{W}(z) = &   \theta z^T \left( P \left( \widehat{A}-\widehat{H}\widehat{C} \right) + \left( \widehat{A}-\widehat{H}\widehat{C} \right)^{T} P \right) z \nonumber \\
  &+ 2 z^T P \Delta_{\theta}^{-1} L_{f_p}h^{(q+1)}(x_p(t))).
\end{align}
We note that the last term satisfies $z^T P \Delta_{\theta}^{-1} L_{f_p}h^{(q+1)}(x_p)) \leq |z||P \Delta_{\theta}^{-1} L_{f_p}h^{(q+1)}(x_p))|$. Under Assumption \ref{assum:compact_smooth_output}, we obtain $|L_{f_p}h^{(q+1)}(x_p))|\leq \bar{\phi}$, where $\bar{\phi}>0$. Hence, $|P \Delta_{\theta}^{-1} L_{f_p}h^{(q+1)}(x_p))| \leq \frac{\bar{\lambda}\bar{\phi}}{\theta^{q}}$, where $\bar{\lambda}:=\lambda_{\max}(P)$ Therefore, in conjuction with \eqref{eq:PA}, the time derivative of $W(z)$ is
\begin{align}
  \dot{W}(z) \leq  &   -\theta \nu |z|^2 +  \frac{2\bar{\lambda}\bar{\phi}}{\theta^{q}} |z| .
\end{align}

Hence, if $\frac{2\bar{\lambda}\bar{\phi}}{\theta^{q}} \leq \frac{\theta \nu}{2} |z|$, then
\begin{align} \label{eq:lem_w_1}
  \dot{W}(z) \leq  &   -\frac{\theta \nu}{2} |z|^2 \leq -\frac{\theta \nu}{2 \underline{\lambda}} W(z),  
\end{align}
where we have used the following fact to obtain the ultimate bound: 
\begin{equation} \label{eq:W_low_high}
 \underline{\lambda}|z|^2 \leq W(z) \leq \bar{\lambda}|z|^2,
\end{equation}
where $\underline{\lambda}:=\lambda_{\min}(P)$.

By the comparison principle, we obtain from \eqref{eq:lem_w_1} that
\begin{equation}
    W(z(t)) \leq \exp\left( - \frac{\theta \nu}{2 \underline{\lambda}} t \right) W(z(0)).
\end{equation}
Applying \eqref{eq:W_low_high}, we conclude that
\begin{equation} \label{eq:z_solution}
    |z(t)| \leq c \exp\left( - \frac{\theta \nu}{4 \underline{\lambda}} t \right) |z(0)|,
\end{equation}
when $|z|\geq \frac{4\bar{\lambda}\bar{\phi}}{\nu \theta^{q+1}}$, where $c:=\sqrt{\bar{\lambda}/\underline{\lambda}}$.

From \eqref{eq:z_solution}, we will choose the estimator parameter $\theta\geq 1$ such that
\begin{enumerate}[(E1)]
    \item when $z(0)\not\in \left\{z: |z|\leq \frac{4\bar{\lambda}\bar{\phi}}{\nu \theta^{q+1}}\right\}=:\Omega$, $z(t)$ converges to $\Omega$ within  finite time and then remains there.
    \item when $z(0)\in\Omega$, $z(t)$ remains in $\tilde{\Omega}:=\left\{z\in\mathbb{R}^{(q+1)n_y}:|z|\leq \rho_{\Psi}^{-1}\left(K_{\tilde{x}} \right) \right\}$, where $\rho_{\Psi}\in\mathcal{K}_{\infty}$ comes from the SG$q$-RO property and $K_{\tilde{x}}>0$ is given.
\end{enumerate}

We first address (E1). In this case, we choose $\theta$ as follows to ensure that $z(t)$ converges to $\Omega$ by $t=t^*$, where $t^*>0$ is assumed to be given. In this case, we choose $\theta \geq 1$ that satisfies
\begin{equation} \label{eq:choose_theta_1}
    \exp\left( - \frac{\theta \nu}{4 \underline{\lambda}} t^* \right) {\Delta_{e,0}} \leq \frac{4 \bar{\lambda}\bar{\phi}}{\theta^{q-1}\nu}. 
\end{equation}
where $\Delta_{e,0}>0$ as defined in Lemma \ref{lem:probe} exists since $\widehat{Y}(0)$ is initialised according to \eqref{eq:estimate_initial}. Notice that by choosing $\theta\geq 1$ to satisfy \eqref{eq:choose_theta_1}, we see from \eqref{eq:z_solution} that although $z(0)\not\in\Omega$, $|z(0)|\leq \Delta_{e,0}/\theta$. Therefore,
\begin{equation} \label{eq:zsol_1}
    |z(t^*)| \leq c \exp\left( - \frac{\theta \nu}{4 \underline{\lambda}} t^* \right) \Delta_{e,0} \leq  c \frac{4 \bar{\lambda}\bar{\phi}}{\theta^{q}\nu}.
\end{equation}

Next, we address (E2). In this case, we choose $\theta \geq 1$ to satisfy
\begin{equation} \label{eq:choose_theta_2}
    \frac{c4\bar{\lambda}\bar{\phi}}{\theta \nu} \leq \rho_{\Psi}^{-1}\left( K_{\tilde{x}}\right).
\end{equation}
With this choice, we see that when $z(0)\in\Omega$, from \eqref{eq:z_solution}, 
\begin{equation}\label{eq:zsol_2}
    |z(t)| \leq c |z(0)| \leq \frac{c4\bar{\lambda}\bar{\phi}}{\nu \theta^{q+1}} \leq   \frac{\rho_{\Psi}^{-1}\left(K_{\tilde{x}}\right)}{\theta^{q}},
\end{equation}
where we obtain the last bound due to our choice of $\theta\geq 1$ according to \eqref{eq:choose_theta_2}. 

Therefore, by choosing $\theta \geq 1$ according to \eqref{eq:choose_theta_1} and \eqref{eq:choose_theta_2}, we ensure that $|z(t^*)|\leq \frac{\rho_{\Psi}^{-1}\left(K_{\tilde{x}}\right)}{\theta^{q}}$ and $|z(t)| \leq c |z(0)| \leq c \Delta_{e,0}/\theta$. 

Next, since $e=\Delta_\theta z$, we get $|e|\leq \theta^{q} |z|$ and
obtain 
\begin{equation}
    |e(t^*)| \leq \rho_{\psi}^{-1}\left(K_{\tilde{x}}\right), \; |e(t)| \leq c \Delta_{e,0} \theta^{q-1}.   
\end{equation}

Finally, by the SG$q$-RO property of \eqref{eq:closed_loop}, \eqref{eq:system_output}, the state estimation error satisfies $|\hat{x}(t)-x(t)|\leq \rho_{\Psi}(|e(t)|)$ for $t\in \mathcal{T}_{0}$. Hence, we obtain \eqref{eq:xhat_T_0} as desired with $\sigma_{\tilde{x}}(r):=cr$. \hfill $\Box$

\subsection{Proof of Lemma \ref{lem:non_probe}}
Given $\Delta_x$, $\epsilon_{\tilde{x}}$, $t^* >0$, choose $T>0$ such that 
\begin{equation} \label{eq:choose_T}
    \beta_{x}(\Delta_x,T-t^*) \leq \epsilon_{\tilde{x}},
\end{equation}
where $\beta_{x}\in\mathcal{KL}$ comes from Assumption \ref{assum:closed_loop_AS}, that the closed loop system \eqref{eq:closed_loop} with \eqref{eq:system_output} semiglobally asymptotically stable for $t\geq t^*$.

Let $t\in\overline{\mathcal{T}}_{0}$. The closed-loop system and state estimate are
\begin{align} \label{eq:system_non_probe}
    \dot{x} & = f(x,h(x_p)), \; y = h(x_p), 
\end{align} \vspace{-5mm}
\begin{equation} \label{eq:state_estimate_non_probe}
    \hat{x}(t)  = \Psi\left(\widehat{Y}(t^*),Y^*(t^*) \right).
\end{equation}
The state estimation error satisfies
\begin{align} \label{eq:estimation_error_bound}
    |\hat{x}(t)-x(t)| & \leq |\hat{x}(t)-x(t^*)+x(t^*)-x(t)| \nonumber \\
    & \leq |\hat{x}(t)-x(t^*)|+|x(t^*)-x(t)|.
\end{align}

A bound on the first term is obtained by the SG$q$-RO property of the closed-loop system \eqref{eq:system_non_probe} at $t=t^*$, when the probing signal $y^*(t^*)$ and observability map $\Psi$ exist. Hence, by Lemma \ref{lem:non_probe}, 
\begin{equation} \label{eq:error_bound_1}
    |\hat{x}(t)-x(t^*)| \leq K_{\tilde{x}}. 
\end{equation}

The second term of \eqref{eq:estimation_error_bound} is bounded as follows
\begin{equation} \label{eq:error_bound_2}
    |x(t^*)-x(t)|    \leq |x(t^*)|+|x(t)|  \leq \Delta_{x} + \beta_{x}(\Delta_{x},t-t^*),
\end{equation}
where we have used the assumption that $|x(t^*)|\leq \Delta_{x}$ and $\beta_x\in\mathcal{KL}$ comes from Assumption \ref{assum:closed_loop_AS} to obtain the final bound. 

Therefore, using \eqref{eq:error_bound_1} and \eqref{eq:error_bound_2} in \eqref{eq:estimation_error_bound}, we obtain
\begin{equation}
    |\hat{x}(t)-x(t)| \leq K_{\tilde{x}} + \Delta_x + \beta_{x}(\Delta_{x},t-t^*).
\end{equation}
We then obtain \eqref{eq:xhat_T_0_non} with  $\overline{\sigma}_{\tilde{x}}(r):=  \beta_{x}(r,0)$ and \eqref{eq:xhat_T_0_T_non} with \eqref{eq:choose_T}, respectively. \hfill $\Box$ 

\subsection{Proof of Lemma \ref{lem:L1_robust}}
Let $t\in[0,T]$ and consider a nominal system 
\begin{equation} \label{eq:nominal}
    \dot{w}=f(w,h(w_p)), \; w(0)\in\mathcal{V}(R), 
\end{equation}
and the perturbed system \eqref{eq:perturb} with $w(0)=x(0)$. The solutions to \eqref{eq:nominal} and \eqref{eq:perturb} are
\begin{align}
    x(t) & = x(0) + \int_{0}^{t} f(x(s),h(x_p(s)))ds + \int_{0}^{t} d(s) ds \nonumber \\
    w(t) & = w(0) + \int_{0}^{t} f(w(s),h(w_p(s)))ds.
\end{align}
Therefore, 
\begin{align}
    |x(t)&-w(t)|\nonumber \\
    &\leq  \int_{0}^{t} |f(x(s),h(x_p(s)))-f(w(s),h(w_p(s)))|ds \nonumber \\
    &\qquad + \left|\int_{0}^{t} d(s) ds \right|, 
\end{align}
and by the Lipshitz property of $f$ (see footnote \ref{foot:lip}) and $h$, we have $l_x$, $l_y>0$ such that 
\begin{align}
    |f(x,h(x_p))&-f(w,h(w_p))|\nonumber \\
    &\leq l_x|x-w|+l_y|h(x_p)-h(w_p)| \nonumber \\
    & \leq l_x|x-w|+l_y l_h |x_p-w_p| \nonumber \\
    & \leq \bar{L} |x-w|,
\end{align}
where we obtain the last inequality due to $|x_p-w_p|\leq |x-w|$ and $\bar{L}$ is as defined in Proposition \ref{prop:stealth}.

Let $\bar{d}:=\underset{t\in[0,T)}{\max} \left|\int_{0}^{T} d(s) ds \right|$. Then,
\begin{equation}
    |x(t)-w(t)|  \leq \int_{0}^{t} \bar{L} |x(s)-w(s)| ds + \bar{d}.  
\end{equation}

By Gronwall-Bellman's lemma \cite[Pg. 651]{khalil2002nonlinear}, we get
\begin{align}
    |x(t)-w(t)| & \leq \bar{d} e^{\bar{L}t} \leq \bar{d} e^{\bar{L}T},  
\end{align}
where we get the last inequality because we consider the time interval $[0,T]$. Since \eqref{eq:perturb_cond} holds, we obtain
\begin{equation}
    |x(t)-w(t)| \leq \rho^{-1}(\sigma). \label{eq:xw}
\end{equation}

We then obtain \eqref{eq:perturb_results} using (C2'), (V1), \eqref{eq:xw} and $x(0)=w(0)$ using the argument below
\begin{align}
    V(x(t))&=V(w(t))+V(x(t))-V(w(t)) \nonumber \\
    & \leq \beta_{V}(V(w(0)),t) + \rho(|v(t)-w(t)|) \nonumber \\
    & \leq \beta_{V}(V(x(0)),t) + \sigma.
\end{align} \hfill $\Box$

\bibliographystyle{ieeetr}
\bibliography{grid_model_analysis.bib}

\end{document}